% A LaTeX2e document SPG v4 2017.09.23

% Preamble
\documentclass[twoside,12pt,leqno]{amsproc}
\usepackage{amssymb,latexsym,enumerate,verbatim,stmaryrd,tikz}
\usepackage[pagebackref,colorlinks,linkcolor=purple,citecolor=blue,urlcolor=blue
,hypertexnames=true]{hyperref}
\usepackage{amsrefs}    % include wasysym for \smiley{} \frownie{}
\usepackage{etoolbox}   % so \AtEndEnvironment{***} works below
\hypersetup{citecolor=purple, linkcolor=blue, colorlinks=true}
\numberwithin{table}{section}

\theoremstyle{plain}
\newtheorem{theorem}{Theorem}[section]
\newtheorem{lemma}[theorem]{Lemma}

\newtheorem{question}[theorem]{Problem}

\theoremstyle{definition} % subsequent appear in roman not italics
\newtheorem{definition}[theorem]{Definition}
\newtheorem{remark}[theorem]{Remark}
\newtheorem{example}[theorem]{Example}
\AtEndEnvironment{remark}{\null\hfill$\triangle$}%  % symbol at end of remark

\binoppenalty=10000\relpenalty=10000\hyphenpenalty=9200

\oddsidemargin 0pt
\evensidemargin 0pt
\textheight 8.1in \textwidth 6.3in
\setlength{\parskip}{5pt}
  % 1.2 for drafts; 2 for double spacing
     % so lines in tables are not crowded

  % before submitting do *query* replacements
  % \geq -> \geqslant, \leq -> \leqslant
   % \ge  -> \geqslant, \le  -> \leqslant
   % WARNING \left begins with \le
\newcommand{\lhdeq}{\trianglelefteqslant}    %stmaryrd normaleq          <=
  %stmaryrd not normaleq  not <=
   %stmaryrd normalizeseq      >=
 %stmaryrd not normalizeseq not >=

\newcommand{\C}{\mathsf{C}}
\newcommand{\CC}{\mathcal{C}}
\newcommand{\D}{\mathsf{D}}
\newcommand{\Z}{\mathbb{Z}}

\newcommand{\alt}{\textup{\sf A}}
\newcommand{\Aut}{\textup{Aut}}

\newcommand{\F}{\mathbb{F}}
\newcommand{\GL}{\mathrm{GL}}
\newcommand{\GaL}{\Gamma\mathrm{L}}
\newcommand{\GSp}{\mathrm{GSp}}
\newcommand{\GU}{\mathrm{GU}}

\newcommand{\PSL}{\mathrm{PSL}}
\newcommand{\PGL}{\mathrm{PGL}}
\newcommand{\SL}{\mathrm{SL}}
\newcommand{\SU}{\mathrm{SU}}

\newcommand{\Out}{\textup{Out}}
\newcommand{\SO}{\mathrm{SO}}
\newcommand{\Sp}{\mathrm{Sp}}
\newcommand{\sym}{\textup{\sf S}}
\newcommand{\Sym}{\textup{\rm Sym}}

\newcommand{\eoc}{{\hfill $\Box$}} % end of case

\makeatletter        % begin hack so date appears with amsproc
\def\@adminfootnotes{%
  \let\@makefnmark\relax  \let\@thefnmark\relax
  \ifx\@empty\@date\else \@footnotetext{\@setdate}\fi%%   <-- added
  \ifx\@empty\@subjclass\else \@footnotetext{\@setsubjclass}\fi
  \ifx\@empty\@keywords\else \@footnotetext{\@setkeywords}\fi
  \ifx\@empty\thankses\else \@footnotetext{%
    \def\par{\let\par\@par}\@setthanks}%
  \fi}\makeatother   % end hack so date appears

\begin{document}

\hyphenation{}

\title[Composition length of primitive groups and completely reducible groups]{Bounding the composition length of\\ primitive permutation groups and\\ completely reducible linear groups}
\author{S.\,P. Glasby, Cheryl E. Praeger, Kyle Rosa, Gabriel Verret}

\address[Glasby, Praeger, Rosa]{
Centre for Mathematics of Symmetry and Computation,
University of Western Australia,
35 Stirling Highway,
Crawley 6009, Australia.\newline
 Email: {\tt Stephen.Glasby@uwa.edu.au} URL: {\tt www.maths.uwa.edu.au/$\sim$glasby/}\newline
 Email: {\tt Cheryl.Praeger@uwa.edu.au}; URL: {\tt www.maths.uwa.edu.au/$\sim$praeger}\newline
 Email: {\tt Kyle.Rosa@research.uwa.edu.au}
 }
 
\address[Verret]{Department of Mathematics, The University of Auckland\\
Private Bag 92019, Auckland 1142, New Zealand. Email: {\tt g.verret@auckland.ac.nz}}
%\email{g.verret@auckland.ac.nz}	

\date{\today}

\begin{abstract}
We obtain upper bounds on the composition length of a finite permutation group in terms of the degree and the number of orbits, and analogous bounds  for primitive, quasiprimitive and semiprimitive groups. Similarly, we obtain upper bounds on the composition length of a finite completely reducible linear group in terms of some of its parameters. In almost all cases we show that the bounds are sharp, and describe the extremal examples. 
\end{abstract}

\maketitle
\begin{center}{{\sc\tiny MSC 2010 Classification: 20B15, 20H30, 20B05}}\end{center}

\section{Introduction}\label{S1}

The composition length  of a finite group  is the length of any composition series of the group. It is sometimes viewed as a measure of its size or complexity. Often it is useful to have bounds in terms of  parameters relevant to the way the group is represented, rather than the abstract group structure. In Subsection~\ref{sub:motiv} we comment on the research questions which motivated our investigation, we describe how our results relate to other work, and mention some open questions.

We obtain upper bounds on the composition length of a finite permutation group in terms of the degree and the number of orbits (Theorem~\ref{C2}), and analogous bounds  for primitive (Theorem~\ref{C1}), quasiprimitive and semiprimitive groups (Theorem~\ref{theo:qpsp}). Similarly, we obtain upper bounds on the composition length of a finite completely reducible linear group in terms of some of its parameters (Theorem~\ref{C3}). We also show in almost all cases that  our bounds are sharp, and describe all extremal examples. For this purpose, we define the following concepts.  A permutation group is \emph{primitive} if it is transitive and  preserves no nontrivial partition of the set on which it acts; transitive permutation groups that preserve some nontrivial point partition are said to be \emph{imprimitive}.

\renewcommand{\labelitemi}{$\circ$}

\begin{definition}\label{maindef}
Let $\sym_4$ denote the symmetric group of degree $4$ in its natural action and let $k$ be a non-negative integer.
\begin{itemize}
\item Let $T_k = \sym_4\wr\cdots\wr\sym_4$, the iterated imprimitive wreath product of $k$ copies of $\sym_4$.
\item Let $P_k=\sym_4\wr T_k$, in its  primitive wreath product action.
\item  Let $L_k=\GL(2,2)\wr T_k$, viewed as an imprimitive linear subgroup of $\GL(2^{2k+1},2)$.
\end{itemize}
Note that $T_k$ is a transitive group of degree $4^k$; in particular $T_0=1$
has degree $1$. Therefore $P_k$ is a primitive group of degree $4^{4^k}$ which
is abstractly isomorphic to $T_{k+1}$.
\end{definition}

For a finite group $G$, let $c(G)$ denote its composition length.

\begin{theorem}\label{C2}
If $G$ is a permutation group of degree $n$ with $r$ orbits, then 
\begin{equation*}
  c(G)\leqslant\frac43\left(n-r\right).
\end{equation*}
Moreover, equality holds if and only if there exist nonnegative integers $k_1,\dots,k_r$ such that  the orbits of $G$ have sizes $4^{k_1},\ldots,4^{k_r}$ and $G$ is permutationally isomorphic to  $T_{k_1}\times\cdots\times T_{k_r}$ in its natural action.
\end{theorem}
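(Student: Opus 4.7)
My plan is to prove the bound $c(G) \leq \tfrac43(n-r)$ by reducing to the transitive case and inducting on the degree, then to handle the equality characterisation by tracking the conditions for equality through each step. I treat Theorem~\ref{C1} as a black box, using in particular that a primitive group $H$ of degree $m$ satisfies $c(H) \leq \tfrac43(m-1)$, with equality only for $H \cong \sym_4$ of degree $4$. Let $\Omega_1, \dots, \Omega_r$ be the orbits of $G$ with $|\Omega_i| = n_i$. Then $G$ embeds into $\prod_{i=1}^r G^{\Omega_i}$, giving $c(G) \leq \sum_i c(G^{\Omega_i})$, so it suffices to prove $c(H) \leq \tfrac43(m-1)$ for every transitive $H$ of degree $m$. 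I do this by induction on $m$: if $H$ is primitive, apply Theorem~\ref{C1}; otherwise fix a nontrivial block system with $m/b$ blocks of size $b$ (where $1 < b < m$), let $K$ be the kernel of $H$ on blocks, and let $H_B$ denote the action of the setwise block-stabiliser on one block $B$ (transitive of degree $b$). Then $H/K$ is transitive of degree $m/b$ and $K \hookrightarrow H_B^{m/b}$, so
\[
  c(H) = c(H/K) + c(K) \leq \tfrac43(m/b - 1) + (m/b)\cdot\tfrac43(b-1) = \tfrac43(m-1).
\]

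For the equality characterisation I use the standard fact (proved by intersecting a composition series with a subgroup) that if $H \leq G$ are finite and $c(H) = c(G)$, then $H = G$. Applying this to the embedding $G \hookrightarrow \prod_i G^{\Omega_i}$, equality in the reduction forces $G = \prod_i G^{\Omega_i}$ and $c(G^{\Omega_i}) = \tfrac43(n_i - 1)$ for each $i$; it therefore suffices to show that a transitive $H$ of degree $m$ achieving $c(H) = \tfrac43(m-1)$ is permutationally isomorphic to $T_k$ with $m = 4^k$. If $H$ is primitive, Theorem~\ref{C1} gives $H \cong T_1 = \sym_4$. If $H$ is imprimitive, equality throughout the inductive step forces (i) $c(H/K) = \tfrac43(m/b-1)$, giving $H/K \cong T_{k'}$ with $m/b = 4^{k'}$ by induction; (ii) $c(H_B) = \tfrac43(b-1)$, giving $H_B \cong T_{k''}$ with $b = 4^{k''}$ by induction; and (iii) $c(K) = c(H_B^{m/b})$, hence $K = H_B^{m/b}$. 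Thus $H$ is the full imprimitive wreath product $H_B \wr (H/K) \cong T_{k''} \wr T_{k'}$, and the permutational associativity of iterated wreath products identifies this with $T_{k'+k''}$, closing the induction with $k = k' + k''$.

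The main obstacle is the transitive equality analysis: the abstract identification $H \cong T_{k'+k''}$ must be upgraded to a permutational isomorphism with the standard $T_k$. This hinges on two points: $H$ must be realised as a \emph{full} imprimitive wreath product $H_B \wr (H/K)$, for which the equality $K = H_B^{m/b}$ (and not merely a subdirect embedding) is essential; and the permutational associativity $T_{k''} \wr T_{k'} \cong T_{k'+k''}$ is needed so that the nested wreath products coming from the recursion collapse to the standard iterated form. By comparison, the upper-bound induction, the reduction to transitive groups, and the subgroup-equality fact are all routine.
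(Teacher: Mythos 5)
Your induction scheme (orbits $\to$ subdirect product, blocks $\to$ subwreath argument) matches the paper's, but there is a genuine logical problem with how you dispose of the primitive case: you invoke Theorem~\ref{C1} as a black box, and in this paper Theorem~\ref{C1} is proved \emph{from} Theorem~\ref{C2}. Concretely, the proof of Theorem~\ref{C1} splits into the non-affine case (Theorem~\ref{theo:na}, whose simple-diagonal and product-action cases bound the top group of a wreath product by $c(B)\leqslant\frac43(b-1)$ using Theorem~\ref{C2}) and the affine case (Theorem~\ref{C3}, whose imprimitive and tensor-imprimitive cases likewise call Theorem~\ref{C2}). So your argument is circular as it stands. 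The paper breaks the circle by treating primitive groups inside the proof of Theorem~\ref{C2} directly: for degree $n\leqslant 24$ a database check, and for $n\geqslant 25$ either $G\supseteq\alt_n$ (so $c(G)\leqslant 2$) or Mar\'oti's bound $|G|\leqslant 2^{n-1}$ gives $c(G)\leqslant n-1<\frac43(n-1)$, so primitive groups of degree $\geqslant 5$ never attain equality and $\sym_4$ is the unique primitive extremal example. You would need to substitute some such self-contained argument for the primitive case. (A smaller point in the same step: even granting Theorem~\ref{C1}, the inequality $\frac83\log_2 m-\frac43\leqslant\frac43(m-1)$ fails at $m=3$, so degree $3$ would need a separate check.)

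A second, repairable, error: the ``standard fact'' you cite --- that $H\leqslant G$ with $c(H)=c(G)$ forces $H=G$ --- is false for arbitrary subgroups; e.g.\ $\alt_5<\alt_6$ with $c(\alt_5)=c(\alt_6)=1$, and composition length is not even monotone under taking subgroups ($c(\sym_4)=4>2=c(\sym_5)$). What is true, and what the paper's Lemma~\ref{La} establishes, is the normal-subgroup version ($N\lhdeq G$ gives $c(N)\leqslant c(G)$ with equality iff $N=G$) together with its consequences for subdirect subgroups of direct products and subwreath subgroups of wreath products; these suffice for every place you use the fact, including the normality of the block-kernel's projections needed to justify $c(K)\leqslant (m/b)\,c(H_B)$, which your embedding $K\hookrightarrow H_B^{m/b}$ alone does not give. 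With those two repairs your equality analysis (full wreath product, permutational associativity $T_{k''}\wr T_{k'}=T_{k'+k''}$) agrees with the paper's.
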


\begin{theorem}\label{C1}
If $G$ is a primitive permutation group of degree $n$, then 
\begin{equation*}\label{E1}
  c(G)\leqslant\frac83\log_2 n-\frac43.
\end{equation*}
Moreover, equality holds if and only if  $n=4^{4^k}$ for some $k\geqslant 0$ and $G$ is permutationally isomorphic to $P_k$.
\end{theorem}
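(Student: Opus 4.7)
My plan is to deduce Theorem~\ref{C1} from Theorem~\ref{C3} via the O'Nan--Scott classification, exploiting the observation that the extremal primitive groups $P_k$ are in fact of affine type. Since $\sym_4\cong\F_2^2\rtimes\sym_3$ and $\sym_3\cong\GL(2,2)$, the definition unfolds as
\[
P_k \;=\; (\F_2^2\rtimes\sym_3)\wr T_k \;=\; \F_2^{2\cdot 4^k}\rtimes(\sym_3\wr T_k) \;=\; \F_2^{2\cdot 4^k}\rtimes L_k,
\]
so $P_k$ is a primitive subgroup of $\AGL(2\cdot 4^k,2)$ with elementary abelian socle of order $4^{4^k}$. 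A short recursion using $c(\sym_4)=4$ and $c(T_k)=4^k+c(T_{k-1})$ (with $T_0=1$) yields $c(P_k)=\tfrac{4(4^{k+1}-1)}{3}=\tfrac{8}{3}\log_2(4^{4^k})-\tfrac{4}{3}$, so $P_k$ indeed attains the claimed bound.

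For a general primitive $G$ of degree $n$, apply the O'Nan--Scott theorem. In the affine case $G=V\rtimes H$ with $V=\F_p^d$, $n=p^d$ and $H\le\GL(d,p)$ irreducible (hence completely reducible), we have $c(G)=d+c(H)$. Theorem~\ref{C3} bounds $c(H)$; combined with $\log_2 n=d\log_2 p\ge d$, this yields $c(G)\le\tfrac{8}{3}\log_2 n-\tfrac{4}{3}$. The extremal characterisation in Theorem~\ref{C3} identifies the equality case as $p=2$, $H\cong L_k$ and $d=2\cdot 4^k$, forcing $G=P_k$.

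The remaining O'Nan--Scott types have nonabelian socle $N=T^\ell$ for some nonabelian finite simple group $T$; here strict inequality must be shown. Since $c(T)=1$, we have $c(G)=\ell+c(G/N)$, with $G/N$ embedded in (a subgroup of) $\Out(T)\wr\sym_\ell$. Each type supplies a lower bound on $n$ in terms of $|T|$ and $\ell$: $n\ge|T|^{\ell-1}$ for simple and compound diagonal, $n\ge m^\ell$ for product action with $m$ the minimal primitive degree of $T$, $n\ge|T|$ for twisted wreath, and $n$ at least the minimal index of a core-free maximal subgroup in the almost simple case. Combining these with classification-based bounds on $c(\Out(T))$, which is $O(\log\log|T|)$, yields the strict inequality.

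The main obstacle will be this uniform verification across all nonabelian finite simple groups $T$: asymptotically $\log_2 n$ grows at least linearly in $\log_2|T|$ while $c(\Out(T))$ is logarithmic, so the crude estimate closes handily, but a finite case check for small simple groups (such as $\alt_5$, $\alt_6$, $\PSL(2,7)$) and for almost simple groups of small primitive degree is likely required, possibly by direct computation in~\Magma.
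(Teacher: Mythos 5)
Your treatment of the affine case is correct and is exactly the paper's argument: write $G=V\rtimes H$ with $H\leqslant\GL(d,p)$ irreducible, apply Theorem~\ref{C3}, and read off the equality case $p=2$, $d=2\cdot 4^k$, $H\cong L_k$, $G=P_k$; your identification $P_k=\F_2^{2\cdot 4^k}\rtimes L_k$ and the computation of $c(P_k)$ are also right. The structural difference is that the paper does not handle the nonabelian-socle types ad hoc inside this proof: it routes them through Theorem~\ref{theo:na}, a separate induction over the O'Nan--Scott types which gives the much stronger constant $c_{\rm na}=10/(3\log_2 5)<8/3$, so strict inequality for non-affine $G$ is then immediate.

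Your sketch of the non-affine case has two concrete problems. First, two of your stated lower bounds on $n$ are wrong or useless: for compound diagonal type $n=|T|^{(a-1)b}$ with $\ell=ab$, which can be far smaller than $|T|^{\ell-1}$; and for twisted wreath type the bound $n\geqslant|T|$ cannot suffice, since $c(G)\geqslant\ell$ is unbounded while $\frac83\log_2|T|-\frac43$ is fixed --- you need $n=|T|^{\ell}$ there. Second, and more seriously, your estimate via $G/N\leqslant\Out(T)\wr\sym_\ell$ only discusses the $\Out(T)$ contribution. The image of $G$ in $\sym_\ell$ is a transitive group whose composition length can be as large as $\frac43(\ell-1)$ (attained by $T_j$ when $\ell=4^j$), i.e.\ linear in $\ell$, and the crude bound $\log_2(\ell!)\sim\ell\log_2\ell$ eventually exceeds $\frac83\log_2 n=\frac83\,\ell\log_2 m$ in the product-action case $n=m^{\ell}$. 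So the claim that ``the crude estimate closes handily'' is false as stated: you must invoke the sharp linear bound of Theorem~\ref{C2} for this transitive top group, which is precisely why the paper proves Theorem~\ref{C2} first and applies it in Cases 2 and 3 of the proof of Theorem~\ref{theo:na}. With Theorem~\ref{C2} inserted and the correct degree formulas, your plan does go through (together with the small-degree and small-group checks you anticipate for the almost simple case), but as written the product-action, compound-diagonal and twisted-wreath cases do not close.
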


These theorems depend on the finite simple group classification since the proof of Theorem~\ref{C1} uses Theorem~\ref{C2}, and the proof of Theorem~\ref{C2} uses an order bound for primitive groups from \cite{M} which depends on the classification.

A group $H$ of linear transformations of a vector space $V$ is \emph{completely reducible} if there is a direct decomposition  $V=V_1\oplus\dots\oplus V_r$, with $r\geqslant 1$, such that each $V_i$ is $H$-invariant and the restriction $H|_{V_i}$ is irreducible.   The $V_i$ are the \emph{irreducible constituents} of $H$.

\begin{theorem}\label{C3}
If  $H$ is a completely reducible subgroup of $\GL(d,p^f)$  with $r$ irreducible constituents $V_1,\dots,V_r$, then
\begin{equation}\label{E3}
  c(H)\leqslant\left(\frac83\log_2 p-1\right)df-r\left(\log_2f+\frac43\right).
\end{equation}
Moreover, equality holds if and only if one of the following occurs:

\begin{enumerate}[{\rm (a)}]
\item  $p^f=2$ and there exist  positive integers $k_1,\dots,k_r$ such that $\dim(V_i)=2^{2k_i+1}$  and $H$ is linearly isomorphic to $L_{k_1}\times\cdots\times L_{k_r}$,  or 
\item $p^f=2^2$, $d=r$ and $H$ is linearly isomorphic to $\GL(1,4)^d\cong (\C_3)^d$.
\end{enumerate}
\end{theorem}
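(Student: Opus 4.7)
My strategy has three stages: reduce to the irreducible case, pass to an affine primitive permutation group and invoke Theorem~\ref{C1}, and then extract the remaining per-constituent saving of $\log_2 f+\tfrac{4}{3}$. Since $H$ is completely reducible with constituents $V_1,\ldots,V_r$, the diagonal restriction $H\hookrightarrow\prod_{i=1}^rH|_{V_i}$ is injective, so $c(H)\leq\sum_ic(H|_{V_i})$. Writing $d_i=\dim_{\F_{p^f}}V_i$, it therefore suffices to prove, for each irreducible $K=H|_{V_i}\leq\GL(d_i,p^f)$, the per-constituent bound
\[
c(K)\leq\bigl(\tfrac{8}{3}\log_2 p-1\bigr)d_if-\log_2 f-\tfrac{4}{3}.
\]
Given such a $K$, I augment it by the scalars $Z=\F_{p^f}^\times$ to form $\hat K=KZ$. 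Any $\hat K$-invariant $\F_p$-subspace of $V_i$ is $Z$-invariant, hence closed under $\F_{p^f}$-multiplication, hence $\{0\}$ or $V_i$ by the $\F_{p^f}$-irreducibility of $K$; thus $\hat K$ is $\F_p$-irreducible on $V_i$, and $G:=V_i\rtimes\hat K$ is a primitive affine permutation group of degree $p^{d_if}$. Theorem~\ref{C1} then yields $c(\hat K)=c(G)-d_if\leq\bigl(\tfrac{8}{3}\log_2 p-1\bigr)d_if-\tfrac{4}{3}$, and since $K\trianglelefteq\hat K$ with $\hat K/K$ a cyclic quotient of $Z$, we have $c(K)\leq c(\hat K)$. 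This already gives the per-constituent bound when $f=1$.

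The case $f\geq2$, which requires an additional $\log_2 f$ of saving, is the main technical obstacle. The key structural observation is that $Z\leq Z(\hat K)$, while the extremal primitive group $P_k$ in Theorem~\ref{C1} satisfies $P_k/V=L_k$ with trivial centre (a straightforward induction starting from $L_0=\GL(2,2)=\sym_3$: any transitive wreath product $\sym_3\wr T_k$ has centre contained in the diagonal of $Z(\sym_3)=1$). Hence equality in Theorem~\ref{C1} for $G$ forces $Z=1$ and $p^f=2$, so for $f\geq2$ the inequality is strict, and the plan is to combine this strict inequality with the composition factors of $\hat K/K$ to obtain a combined saving of at least $\log_2 f$. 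This is directly verifiable in case~(b): with $K=Z=\GL(1,4)$ one has $\hat K=K$ and $G=\alt_4$ with $c(G)=3$, exactly $\log_2 f=1$ below the Theorem~\ref{C1} bound of $4$. Establishing the general quantitative refinement---a $\log_2 f$ gap in Theorem~\ref{C1} whenever the affine primitive $G$ preserves a proper subfield---is the main technical work, and most plausibly proceeds by revisiting the proof of Theorem~\ref{C1} and tracking the slack introduced by the $\F_{p^f}$-structure on $V_i$.

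Summing the per-constituent bound over $i$ yields the global inequality, and tracing equality back through the stages forces each constituent to be extremal: either all $(p^f,K_i)=(2,L_{k_i})$, giving case~(a), or all $(p^f,d_i,K_i)=(4,1,\GL(1,4))$, giving case~(b).
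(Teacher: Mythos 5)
There are two genuine gaps, and the first is structural. Your plan bounds $c(K)$ for an irreducible $K\leqslant\GL(d_i,p^f)$ by forming the affine primitive group $G=V_i\rtimes KZ$ and invoking Theorem~\ref{C1}. But in this paper the affine case of Theorem~\ref{C1} is \emph{deduced from} Theorem~\ref{C3}: for $G=(\C_p)^d\rtimes H$ with $H$ irreducible, the proof in Section~\ref{sec:last} writes $c(G)=d+c(H)$ and cites Theorem~\ref{C3} to bound $c(H)$. So your argument is circular, and the circularity cannot be repaired by a joint induction: to prove the per-constituent bound for $K\leqslant\GL(d_i,p^f)$ you would need Theorem~\ref{C1} in degree $p^{d_if}$, whose affine case is exactly the assertion of Theorem~\ref{C3} for irreducible subgroups of $\GL(d_if,p)$ --- and $KZ$, viewed over $\F_p$, is such a subgroup. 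No parameter decreases. The actual proof goes the other way: Theorem~\ref{C2} is proved first, then Theorem~\ref{C3} is proved by induction on $(d,f)$ via a case analysis over Aschbacher's classes $\CC_1,\dots,\CC_9$ (using Theorem~\ref{C2} for the imprimitive and tensor-imprimitive cases, Clifford theory for $\CC_3$, and order bounds for symplectic-type, classical and almost simple groups), and only then is Theorem~\ref{C1} deduced.

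The second gap is that, even granting Theorem~\ref{C1}, your reduction only yields the bound with the term $-\tfrac43$ rather than $-(\log_2 f+\tfrac43)$ per constituent, and you explicitly defer the extra $\log_2 f$ saving for $f\geqslant2$ to ``revisiting the proof of Theorem~\ref{C1} and tracking the slack.'' That deferred step is not a refinement of an existing argument --- it is the bulk of the theorem. In the paper the $\log_2 f$ term is extracted separately in each Aschbacher case (e.g.\ Case~0 for $d=1$, the subfield case $\CC_5$ via the index bound $|H/N|<2p^{f-a}$, the extension-field case $\CC_3$ via the identity $\log_2(fa)=\log_2 f+\log_2 a$, and explicit order estimates in Cases~6, 8 and~9). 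Observing that equality in Theorem~\ref{C1} forces trivial scalars, and checking the single example $\GL(1,4)$, establishes only strictness, not the quantitative gap of $\log_2 f$. A minor further point: your justification ``the diagonal map is injective, so $c(H)\leqslant\sum_i c(H|_{V_i})$'' is not valid as stated, since composition length is not monotone under passage to arbitrary subgroups; the correct justification is that $H$ is a subdirect subgroup of $\prod_i H|_{V_i}$, so Lemma~\ref{La}(b) applies (and this is also what drives the equality analysis, forcing $H=\prod_i H|_{V_i}$).
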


\subsection{Context, discussion, and more results}\label{sub:motiv}

For a finite group $G$ of order $m$, $c(G)\leqslant \log_2(m)$, with equality if and only if $G$ is a $2$-group (with each composition factor cyclic of order $2$).  Similarly each of the upper bounds in \cites{Ba81,Ba82,Cam81,M,PS,Wie69} on the orders of finite primitive permutation groups $G$ of degree $n$ yields an upper  bound for $c(G)$ as a function of $n$. The best of these order bounds  \cite{Cam81}*{Theorem 6.1(S)}, due to Cameron in 1981, depends on the finite simple group classification: namely a primitive group $G$ of degree $n$ is  of affine type, or is in a well understood family of primitive groups of product action type, or satisfies $|G|\leqslant n^{c\log_2\log_2 n}$ for a ``computable constant $c$''.  

In 1993, Pyber~\cite{Pyb93}*{Theorem 2.10} states that, for a primitive permutation group $G$ of degree $n$,  $c(G)\leqslant (2+c)\log_2n$ with $c$ the constant in Cameron's result. A proof of this result appeared recently  in  \cite{GMP}*{Corollary 6.7}.\footnote{The statement in \cite{Pyb93}*{Theorem 2.10}  refers to a paper ``in preparation'' (reference [Py5] in \cite{Pyb93}).}  It has been used in several investigations. For example, it is used for the irreducible case of \cite{LuMM}*{Theorem C}, which bounds the composition length of finite  completely reducible linear groups, and it is used in \cite{DL}*{p.\;305} to bound the invariable generation  number for permutation groups.  For the application in \cite{DL} the result  \cite{LuMM}*{Theorem C} is applied  with the constant $c=2.25$. The paper \cite{GMP} derives many bounds for permutation groups and  linear groups  $G$ focussing on bounds for $|\Out(G)|$. In particular   \cite{GMP}*{Corollary 6.7} yields the bound $c(G)\leqslant (2+c)\log_2n$ with the constant $c= \log_9(48\cdot 24^{1/3})=2.24\cdots$,
that is to say, $c(G)\leqslant c'\log_2n$ with $c' = 4.24\cdots$.

Our investigations began before  \cite{GMP} was published. Because we had been unable to find a proof of  Pyber's result  in the literature,  and because of its diverse applications,  we decided to seek the best value for a constant $c'$ such that $c(G)\leqslant c'\log_2 n$ whenever $G$ is a primitive permutation  group of degree $n$.  Further, we wondered if we could find sharp upper bounds and classify all groups attaining them.  Our Theorem~\ref{C1} achieves this, and in particular  shows that the best value for such a constant $c'$ is $8/3=2.66\cdots$.  

Whereas all the primitive permutation groups $G$ achieving the bounds of Theorem~\ref{C1} are of affine type, the  primitive groups of  degree $n$  covered by Cameron's ``order upper bound''  $n^{c\log_2\log_2 n}$, are in particular not of affine type.   The following companion result to Theorem~\ref{C1} gives a sharp upper bound on the composition length of \emph{non-affine primitive groups},  by which we mean primitive permutation groups with no nontrivial abelian normal subgroups. 

\begin{theorem}\label{theo:na}
If  $G$ is a non-affine primitive permutation group of degree $n$, then  
\begin{equation*}
  c(G)\leqslant{} c_{\rm na}\log_2 n - \frac{4}{3}, \quad \mbox{where}\  c_{\rm na} = \frac{10}{3 \log_25} = 1.43\cdots
\end{equation*}
with equality if and only if $n=5^{4^k}$ and $G= \sym_5\wr T_k$ in product action, for some $k\geqslant0$.
\end{theorem}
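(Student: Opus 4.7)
The plan is to use the O'Nan--Scott theorem to split non-affine primitive groups into types, and then to reduce the bound to a base case for almost simple groups, which is then lifted to the full result using Theorem~\ref{C2} to bound the ``top'' group in a wreath product. The constant $c_{\rm na} = 10/(3\log_2 5)$ is forced by the proposed extremal example: $c(\sym_5\wr T_k) = 2\cdot 4^k + \frac{4}{3}(4^k-1) = \frac{10\cdot 4^k - 4}{3}$, which equals $c_{\rm na}\log_2(5^{4^k}) - \frac{4}{3}$ exactly.

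\emph{Step 1 (almost simple base case).} I would first prove that if $G$ is almost simple with socle $T$ and degree $n$, then $c(G) \le c_{\rm na}\log_2 n - \frac{4}{3}$, with equality if and only if $G = \sym_5$ in its natural action. Since $c(G) = 1 + c(G/T) \le 1 + c(\Out(T))$, this reduces to a CFSG-based finite check pitting $c(\Out(T))$ against the minimal faithful permutation degree $n(T)$. For $T = \alt_m$ ($m\ge 5$) one has $c(G) \le 2$ and $n \ge m$, and the bound $c_{\rm na}\log_2 m - \frac{4}{3} \ge 2$ holds precisely for $m \ge 5$, with equality at $m=5$, $G=\sym_5$. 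For $T$ sporadic, classical, or exceptional of Lie type, I would combine a bound on $c(\Out(T))$ with the CFSG lower bounds on $n(T)$ to force strict inequality. This case-by-case analysis is the main obstacle, as it is here that both the constant $c_{\rm na}$ and the unique extremal socle $\alt_5$ are pinned down.

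\emph{Step 2 (diagonal, holomorph and twisted wreath types).} For a non-affine primitive $G$ with socle $T^\ell$ ($\ell\ge 2$, $T$ nonabelian simple) of simple diagonal, compound diagonal, holomorph, or twisted wreath type, the degree is at least $|T|^{\ell-1}$, and $c(G) \le \ell + c(\Out(T)) + c(K)$ where $K\le \sym_\ell$ is the induced transitive action on the $\ell$ simple factors. Using $|T|\ge 60$, Theorem~\ref{C2} to bound $c(K)$ by $\tfrac43(\ell-1)$, and the standard CFSG estimate $c(\Out(T)) = O(\log|T|)$, one verifies that the resulting inequality is strict for each such $G$; in particular no such group attains the bound.

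\emph{Step 3 (product action and equality).} If $G$ is of product action type, then $G \le H \wr \sym_\ell$ acting on $\Delta^\ell$, where $H$ is primitive almost simple or diagonal on $\Delta$ with $|\Delta| = m$ and $\ell\geq 2$, and the projection $K$ of $G$ to $\sym_\ell$ is transitive of degree $\ell$. Then $c(G) \le \ell\, c(H) + c(K)$, so by Steps~1--2 and Theorem~\ref{C2},
\[
c(G) \le \ell\bigl(c_{\rm na}\log_2 m - \tfrac{4}{3}\bigr) + \tfrac{4}{3}(\ell-1) = c_{\rm na}\log_2 n - \tfrac{4}{3}.
\]
Equality forces equality in both summands: by Step~1, $H = \sym_5$ acting on $5$ points (Step~2 rules out a diagonal base), and by the equality clause of Theorem~\ref{C2}, $K$ is a transitive extremal group of degree $\ell$ in a direct-product decomposition with one factor, i.e.\ $K = T_k$ with $\ell = 4^k$. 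Therefore $G = \sym_5 \wr T_k$ and $n = 5^{4^k}$, yielding the stated classification.
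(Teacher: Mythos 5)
Your overall architecture is the same as the paper's: a base case for almost simple groups, a direct argument for diagonal-type groups, and an inductive product-action step that telescopes via Lemma~\ref{La}(c) and Theorem~\ref{C2} to give $\ell(c_{\rm na}\log_2 m - \frac43)+\frac43(\ell-1)=c_{\rm na}\log_2 n-\frac43$, with the equality analysis pinning down $\sym_5\wr T_k$. (Two small slips in Step~1: for $T=\alt_6$ one can have $c(G)=3$, not $\le 2$, since $|\Out(\alt_6)|=4$; and an $O(\log|T|)$ estimate for $c(\Out(T))$ cannot by itself verify an inequality with an explicit constant --- you need something concrete like $|\Out(T)|\le\log_2|T|$ or $|\Out(T)|<m/2$. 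Both are repairable, and the paper handles $\alt_6$ as a separate subcase.)

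The genuine gap is in Step~2. For a group with socle $T^\ell$ of compound diagonal type the degree is $|T|^{\ell-s}$ for some divisor $s\ge 2$ of $\ell$, and for holomorph compound type it is $|T|^{\ell/2}$; neither is at least $|T|^{\ell-1}$, so your stated degree lower bound is false for these types and the verification of strictness built on it does not go through. Moreover, the bound $c(G)\le \ell + c(\Out(T)) + c(K)$ also fails for these types: $G/\mathrm{soc}(G)$ embeds in $\Out(T)\times\sym_\ell$ only in the HS and SD cases (where the diagonal subgroup links the outer automorphism components); for CD and HC the quotient can contain several independent copies of $\Out(T)$, contributing up to $s\cdot c(\Out(T))$. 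The repair is exactly what the paper does: restrict the direct ``diagonal'' case to types HS and SD (where $n=|T|^{\ell-1}$ and $G/\mathrm{soc}(G)\le\Out(T)\times\sym_\ell$ genuinely hold), and route CD, HC and TW through the product-action step as subwreath subgroups of $A\wr B$ with $A$ of SD or HS type, so that the inductive hypothesis $c(A)\le c_{\rm na}\log_2 a-\frac43$ applies. Your Step~3 already permits a diagonal base, so the fix fits your framework, but as written those three types fall between your cases.
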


We note the striking difference between the logarithmic upper bounds on $c(G)$ for primitive groups $G$  in  Theorems~\ref{C1} and~\ref{theo:na}, and the linear bound for general permutation groups in Theorem~\ref{C2}. 

\begin{question}\label{q1}
Which other infinite families of permutation groups have composition lengths bounded above  by a logarithmic function of the degree? 
\end{question}

Our final main result gives examples of two such families. A permutation group is \emph{quasiprimitive} if each of its nontrivial normal subgroups is transitive. It is \emph{semiprimitive} if each of its normal subgroups is either semiregular or transitive. (A permutation group is \emph{semiregular} if the only element fixing a point is the identity.)

\begin{theorem}\label{theo:qpsp}
  Let $G$ be a permutation group of degree $n$.
  \begin{enumerate}[{\rm (a)}]
    \item If $G$ is quasiprimitive but not primitive, then 
    \[
      c(G)\leqslant{} c_{\rm na}(\log_2{n}-1)-\frac43 = c_{\rm na} \log_2n - 2.76\cdots
    \]
    where $c_{\rm na} = \frac{10}{3 \log_25} = 1.43\cdots$ 
    as in Theorem~\textup{\ref{theo:na}}.
  \item If $G$ is semiprimitive but not quasiprimitive, then
    $c(G)\leqslant{}\frac{8}{3}\log_2{n}-3$.
  \end{enumerate}
\end{theorem}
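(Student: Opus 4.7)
For part (a), the plan is to reduce to Theorem~\ref{theo:na}. Since $G$ is quasiprimitive but not primitive, it preserves some nontrivial partition; I would choose $\mathcal{B}$ to be a maximal such one, so that $G$ acts primitively on $\mathcal{B}$ with $|\mathcal{B}| = n/b$ for some block size $b \ge 2$. First I would argue that $G$ acts faithfully on $\mathcal{B}$: the kernel $K$ of this action fixes every block setwise, hence is not transitive on $\Omega$, and quasiprimitivity forces $K = 1$. Next I would show the primitive action on $\mathcal{B}$ is non-affine: an affine primitive action would have an abelian regular socle $V$, which would then be an abelian normal subgroup of $G$ of order $n/b < n$; but quasiprimitivity would force $V$ transitive on $\Omega$ and hence $|V| \ge n$, a contradiction. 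Applying Theorem~\ref{theo:na} to the non-affine primitive action of $G$ on $\mathcal{B}$ gives
\[
c(G) \le c_{\rm na}\log_2(n/b) - \frac{4}{3} \le c_{\rm na}(\log_2 n - 1) - \frac{4}{3}.
\]

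For part (b), the plan is to quotient out a maximal normal semiregular subgroup and apply either Theorem~\ref{C1} or part (a) to the resulting quasiprimitive quotient. Since $G$ is semiprimitive but not quasiprimitive, I would pick $N$ maximal among normal semiregular subgroups of $G$; by hypothesis $|N| \ge 2$. Let $\mathcal{B}$ be the partition of $\Omega$ into $N$-orbits and $K$ the kernel of the $G$-action on $\mathcal{B}$. The key structural claim is $K = N$: clearly $N \le K$; conversely, $K$ fixes every block setwise, so is not transitive on $\Omega$, hence by semiprimitivity $K$ is semiregular, and maximality of $N$ then forces $K \le N$. Thus $G/N$ acts faithfully on $\mathcal{B}$. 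An analogous argument shows that $G/N$ is quasiprimitive on $\mathcal{B}$: any nontrivial normal subgroup of $G/N$ lifts to some $M$ normal in $G$ with $M > N$, which is not semiregular by maximality and therefore transitive on $\Omega$ by semiprimitivity, and so acts transitively on $\mathcal{B}$. Applying Theorem~\ref{C1} (if $G/N$ is primitive) or part (a) (otherwise) yields $c(G/N) \le \frac{8}{3}\log_2(n/|N|) - \frac{4}{3}$, and combining with $c(N) \le \log_2|N|$ and $|N| \ge 2$ gives
\[
c(G) \le \log_2|N| + \frac{8}{3}\log_2(n/|N|) - \frac{4}{3} = \frac{8}{3}\log_2 n - \frac{5}{3}\log_2|N| - \frac{4}{3} \le \frac{8}{3}\log_2 n - 3.
\]

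The main obstacle I anticipate is the pair of structural claims in part (b), namely that $K = N$ and that $G/N$ is quasiprimitive on $\mathcal{B}$: both rely on combining the semiprimitivity of $G$ with the maximality of $N$, and critically use that $K$ acts within each block. A secondary issue is handling degenerate cases in which $G$ is itself semiregular (either regular or intransitive on $\Omega$) and $N = G$, so that $\mathcal{B}$ has only one block or $G/N$ is trivial; in these cases the direct bound $c(G) \le \log_2 n$ gives the result for $n \ge 4$, and the finitely many smaller cases can be checked by hand.
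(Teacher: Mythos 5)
Your part (a) is essentially the paper's argument: pass to a maximal block system, use quasiprimitivity to make the action on the blocks faithful, rule out the affine case by comparing the order of the (abelian, regular) socle of the primitive quotient with $n$, and apply Theorem~\ref{theo:na} together with the fact that the number of blocks is at most $n/2$. That part is correct.

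In part (b) there is a genuine gap in the choice of $N$. You take $N$ maximal among \emph{semiregular} normal subgroups, but a semiregular normal subgroup may be transitive (i.e.\ regular); in that case your partition $\mathcal{B}$ into $N$-orbits consists of a single block, the kernel $K$ of the action on $\mathcal{B}$ is all of $G$, and the step ``$K$ fixes every block setwise, so is not transitive on $\Omega$'' fails. This situation really occurs with $N<G$: let $G=\C_3^{\,2}\rtimes\C_2$ (order $18$, with $\C_2$ acting by inversion) in its natural action on $n=9$ points. Its proper nontrivial normal subgroups are the four subgroups of order $3$ (semiregular and intransitive, so $G$ is semiprimitive but not quasiprimitive) together with the regular subgroup $\C_3^{\,2}$; the latter contains all of the former, so it is the unique maximal semiregular normal subgroup, and it is transitive. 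Your degenerate-case patch does not cover this: here $G$ is not semiregular and $N\neq G$, so the bound $c(G)\leqslant\log_2 n$ you invoke has no justification (indeed $|G|>n$). The repair is exactly the paper's choice: since $G$ is not quasiprimitive it has a nontrivial \emph{intransitive} normal subgroup, and you should take $M$ maximal among these. Semiprimitivity then forces $M$ to be semiregular, intransitivity guarantees at least two blocks, and your arguments that $K=M$ and that $G/M$ is quasiprimitive on the set of $M$-orbits go through (maximality of $M$ among intransitive normal subgroups gives directly that any normal $M'>M$ is transitive). With that change the rest of your part (b), namely $c(G)=c(M)+c(G/M)\leqslant\log_2 m+\frac83\log_2(n/m)-\frac43=\frac83\log_2 n-\frac43-\frac53\log_2 m\leqslant\frac83\log_2 n-3$, matches the paper's computation.
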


We give infinitely many examples to show that the bound in Theorem~\ref{theo:qpsp}(b) is  best possible (see Example~\ref{ex:semiprim}). For a semiprimitive group $G$, a normal subgroup of $G$ which is minimal subject to being transitive, is called a \emph{plinth}. If $G$ is a semiprimitive group which achieves the $\frac{8}{3}\log_2{n}-3$ bound  in Theorem~\ref{theo:qpsp}(b), 
then $n$ is a power of $2$ and each plinth of $G$ is a $2$-group (Remark~\ref{rem:semiprim}). Unfortunately  the bound for quasiprimitive groups is not sharp (Remark~\ref{rem:qp}), and we do not even know the best constant $c$ such that $c(G)\leqslant c\log_2n$ for a quasiprimitive group $G$ of degree $n$ which is not primitive. By Theorem~\ref{theo:qpsp}, $c\leqslant c_{\rm na}=1.43\cdots$, and we give examples in Section~\ref{sec:semi} which show that $c\geqslant  \frac{31}{12\log_2 5\, +\, 9\log_2 3}= 0.73\cdots$.

\begin{question}\label{q2}
\begin{enumerate}[{\rm (a)}]
  \item Find a sharp upper bound on the composition length in terms of the
  degree, for quasiprimitive permutation groups which are not primitive.
  \item Determine all semiprimitive groups which achieve the bound in
  Theorem~\textup{\ref{theo:qpsp}(b)}. 
\item For $G\leqslant\sym_n$, with $G$ semiprimitive and not quasiprimitive and
  with
  an insoluble plinth, find a sharp upper bound for $c(G)$ as a function of $n$.
  \end{enumerate}
\end{question}
The proof of  Theorem~\ref{C1}  proceeds by considering various types of finite primitive permutation groups. In particular a  primitive subgroup $G\leqslant \Sym(\Omega) = \sym_n$ may leave invariant a cartesian decomposition $\Omega = \Delta^r$ for  some smaller set $\Delta$ and integer $r\geqslant2$.  In this case $n=m^r$ where $m=|\Delta|$, and the group $G$ is permutationally  isomorphic to a subgroup of the wreath product $\Sym(\Delta)\wr\sym_r$ in product action. Moreover $G$ must project to a transitive subgroup of $\sym_r$, and for $c(G)$ to be maximised we require the composition length of this transitive subgroup of $\sym_r$ to be as large as possible.  In other words, in order to prove Theorem~\ref{C1} for these product action primitive groups we need the bound (and extreme examples) from Theorem~\ref{C2} for transitive groups. We note that our result Theorem~\ref{C2} extends early work by Fisher dating from 1974. Namely we improve~\cite{F}*{Lemma~2} by proving that permutation groups of the form $T_{k_1}\times \cdots\times T_{k_r}$ are the only examples, with $r$ orbits, for which equality occurs in the upper bound in Theorem~\ref{C2}.  (One reason for giving an independent proof  is that there appears to be a small error in the proof of~\cite{F}*{Lemma~2}: the sentence beginning ``If $G$ is transitive and imprimitive'' is incorrect.)  

Another class of primitive groups which must be considered when proving Theorem~\ref{C1} are those of affine type. These are groups of affine transformations of a finite vector space and have the form $N\rtimes G_0$, where $N$ is the group of translations, and $G_0$ is an irreducible subgroup of linear transformations. Thus in order to prove Theorem~\ref{C1} for affine primitive groups we need the bound (and extreme examples) from Theorem~\ref{C3} for irreducible groups.

Our work on completely reducible groups also strengthens various results in the literature. As early as 1974, Fisher~\cites{F,F2} obtained estimates for the polycyclic  chief lengths of linear groups (over arbitrary fields).    More recent work by Lucchini et al. \cite{LuMM}*{Theorem C}, relying on the finite simple group classification,  shows that, for a completely reducible subgroup $G\leqslant\GL(d, p^f)$ (with $p$ prime),  $c(G)\leqslant c_{\rm cr} (\log_2 p)dnf$ for some constant $c_{\rm cr}$.   Theorem~\ref{C3}  shows that the best possible constant $c_{\rm cr}$ is $8/3$. The immediate motivation for our work was~\cite{CompFactors}*{Theorem~1} (on the number of composition 
 factors $\C_p$)  which suggested that it  might be possible to find sharp upper bounds  on $c(G)$ for all finite  completely reducible groups.

\section{Preliminaries}

We say that $H$ is a~\emph{subdirect subgroup} of $G_1\times\cdots\times G_r$ if $H$ projects onto each direct factor.  Given a group $G_1$ and a transitive permutation group $G_2$ of degree $r$, the \emph{wreath product}  $G_1\wr G_2$ is $B \rtimes G_2$ where $B=B_1\times\cdots\times B_r\cong G_1^r$, with  $G_2$  acting naturally by conjugation  on the $B_i$.  We say that $H$ is a~\emph{subwreath subgroup} of $G_1\wr G_2$  if $H$ projects onto $G_2$, and the normaliser in $H$ of $B_1$ projects onto $B_1$.

\begin{lemma}\label{La}
Let $G$ be a finite group.
\begin{enumerate}[{\rm (a)}]
\item  If $N\trianglelefteqslant G$, then $c(N)\leqslant c(G)$ with equality if and only if $N=G$.
\item  If $H$  is a subdirect subgroup of $G_1\times\cdots\times G_r$, then $c(H)\leqslant \sum_{i=1}^r c(G_i)$, with equality if and only if $H=G_1\times\cdots\times G_r$.
\item  If $G_2$ is a transitive permutation group of degree $r$ and $H$ is a subwreath subgroup of $G_1\wr G_2$, then $c(H)\leqslant r\cdot c(G_1) + c(G_2)$, with equality if and only if $H=G_1\wr G_2$.
\end{enumerate}
\end{lemma}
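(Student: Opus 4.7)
The plan is to dispatch (a) via the standard additivity $c(G)=c(N)+c(G/N)$ for $N\trianglelefteqslant G$, and then deduce (b) and (c) from a strengthening of (a) to arbitrary subgroups, combined with short-exact-sequence arguments. Additivity comes from concatenating composition series of $N$ and $G/N$, and gives (a) immediately: $c(N)\leqslant c(G)$ with equality iff $G/N=1$, i.e.\ $N=G$. The key auxiliary fact I would prove first is that for \emph{any} subgroup $K\leqslant G$, $c(K)\leqslant c(G)$ with equality iff $K=G$. Given a composition series $1=N_0<\cdots<N_n=G$, the chain $\{N_i\cap K\}$ is subnormal in $K$, and each factor $(N_i\cap K)/(N_{i-1}\cap K)$ embeds in the simple quotient $N_i/N_{i-1}$, so is either trivial or isomorphic to it; hence $c(K)$ equals the number of nontrivial factors, so $c(K)\leqslant n$, with equality forcing every factor to be nontrivial and therefore $|K|=|G|$. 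This is the main technical point, since composition series do not themselves restrict to composition series of subgroups.

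For (b), I would induct on $r$, the case $r=1$ being trivial. For $r>1$, let $H'$ denote the image of $H$ under the projection onto $G_1\times\cdots\times G_{r-1}$, which is still subdirect by hypothesis, and let $N=H\cap(1\times\cdots\times 1\times G_r)$, naturally a subgroup of $G_r$. By (a), $c(H)=c(N)+c(H')$; induction gives $c(H')\leqslant\sum_{i=1}^{r-1}c(G_i)$ and the subgroup lemma gives $c(N)\leqslant c(G_r)$, yielding the desired bound. For equality, induction forces $H'=G_1\times\cdots\times G_{r-1}$ and the subgroup lemma forces $N=G_r$; combining these, any tuple $(g_1,\ldots,g_r)$ can be written as a lift of $(g_1,\ldots,g_{r-1})\in H'$ multiplied by an element of $1\times\cdots\times 1\times N$, so lies in $H$, whence $H=G_1\times\cdots\times G_r$.

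For (c), write $G_1\wr G_2=B\rtimes G_2$ with $B=B_1\times\cdots\times B_r\cong G_1^r$, and let $K=H\cap B\trianglelefteqslant H$. The first clause of the subwreath hypothesis gives $H/K\cong G_2$, so by (a), $c(H)=c(K)+c(G_2)$. Since $K\leqslant B\cong G_1^r$, the subgroup lemma gives $c(K)\leqslant c(B)=r\cdot c(G_1)$, proving the bound. For equality, $c(K)=c(B)$ forces $K=B$ by the subgroup lemma, and combined with $H/B\cong G_2$ one concludes $H=B\rtimes G_2=G_1\wr G_2$.
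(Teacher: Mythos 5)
Your proof has a fatal gap: the ``key auxiliary fact'' that $c(K)\leqslant c(G)$ for an \emph{arbitrary} subgroup $K\leqslant G$ is false. Take $G=\alt_5$ and $K=\alt_4$: then $c(G)=1$ but $c(K)=3$. The error in your argument for it is the assertion that each factor $(N_i\cap K)/(N_{i-1}\cap K)$, because it embeds in the simple group $N_i/N_{i-1}$, must be trivial or isomorphic to $N_i/N_{i-1}$; a nonabelian simple group has proper nontrivial subgroups, so $\{N_i\cap K\}$ is merely a subnormal series of $K$ whose factors may themselves have large composition length, and refining it to a composition series can produce far more than $n$ factors. Composition length is monotone under \emph{normal} subgroups (your part (a), which is fine) but not under arbitrary ones --- this is precisely why the lemma restricts to subdirect and subwreath subgroups.

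The false lemma is then used essentially in (b) and (c). In (b) it is applied to $N=H\cap(1\times\cdots\times 1\times G_r)$; this step is repairable, because $N$ is in fact \emph{normal} in $G_r$ (it is normalised by $H$, which projects onto $G_r$, and conjugation on the last coordinate factors through that projection), so part (a) applies --- but you must say this, and the same observation is the heart of the paper's proof, which filters $H$ by the kernels $K_i$ of the projections and notes that $H_{i-1}/H_i\cong H_{i-1}K_i/K_i\trianglelefteqslant H/K_i\cong G_i$. In (c) the damage is irreparable as written: $K=H\cap B$ need not be normal (or subdirect) in $B$, and your argument never invokes the second half of the subwreath hypothesis, namely that the normaliser of $B_1$ in $H$ projects onto $B_1$. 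Without that hypothesis the conclusion is simply false: with $r=1$, $G_2=1$ and $G_1=\alt_5$, the subgroup $H=\alt_4$ projects onto $G_2$ yet $c(H)=3>1=1\cdot c(G_1)+c(G_2)$. The paper instead combines the subwreath hypothesis with the transitivity of $G_2$ to show that each projection $K_i=\pi_i(K)$ is normal in $B_i$, and then applies parts (a) and (b) to $K$ viewed as a subdirect subgroup of $K_1\times\cdots\times K_r$.
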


\begin{proof}
(a) Clearly $c(G)=c(N)+c(G/N)$ and $c(G/N)=0$ if and only if $N=G$.

(b) Let $H_0=H$ and, for $1\leqslant i\leqslant r$, let $K_i$ be the kernel of the projection map $\pi_i\colon H\to G_i$, and $H_i= H\cap K_1\cap\cdots\cap K_i$. The normal series $H=H_0\trianglerighteqslant H_1\trianglerighteqslant\cdots\trianglerighteqslant H_r=1$ has factor groups
\begin{equation*}
  \frac{H_{i-1}}{H_i}=\frac{H_{i-1}}{H_{i-1}\cap K_i}
  \cong\frac{H_{i-1}K_i}{K_i}\trianglelefteqslant\frac{H}{K_i}\cong G_i.
\end{equation*}
Therefore $c(H_{i-1}/H_i)\leqslant c(G_i)$ by part~(a), and so
\begin{equation*}
  c(H)=\sum_{i=1}^rc(H_{i-1}/H_i) \leqslant \sum_{i=1}^rc(G_i)=c(G_1\times\cdots\times G_r).
\end{equation*}
If equality holds, then for each $i$, $c(H_{i-1}/H_i)= c(G_i)$ which implies that $H_{i-1}/H_i\cong G_i$ by part~(a). In particular, $|H_{i-1}/H_i|=|G_i|$ and so $|H|=\prod_{i=1}^r |H_{i-1}/H_i|=\prod_{i=1}^r |G_i|=|G|$ and thus $H=G$, as claimed.

(c) Write $G_1\wr G_2 = B\rtimes G_2$ where $B=B_1\times\dots\times B_r\cong G_1^r$ and $G_2$ permutes the $B_i$ transitively by conjugation. Let $K= H\cap B$ and let $N$ be the normaliser of $B_1$ in $G_1\wr G_2$. Since $H$ is a subwreath subgroup of $G_1\wr G_2$, we have $H/K\cong G_2$ and $H\cap N$ projects onto $B_1$.    In particular,
\begin{equation*}
  c(H) = c(K) + c(G/K) =  c(K) + c(G_2).
\end{equation*}
For each $i$, let $\pi_i\colon B\to B_i$ be the natural projection map and let $K_i= \pi_i(K)$. Since $B\lhdeq N$, we see $K=H\cap B\lhdeq H\cap N$
and therefore $\pi_1(K)\lhdeq \pi_1(H\cap N)$, that is $K_1\lhdeq B_1$.
 Since $G_2$ is transitive on $\{B_1,\dots,B_r\}$, we have that $K_i\lhdeq B_i$ for each~$i$. Hence, part~(a) implies $c(K_i)\leqslant c(B_i)=c(G_1)$ for each~$i$.

 However, $K$ is a subdirect subgroup of $K_1\times \dots\times K_r$
 by the definition of $K_i$. Therefore by part~(b), $c(K)\leqslant \sum_{i=1}^r c(K_i)  = r\cdot c(K_1) \leqslant r\cdot c(G_1)$. Thus $c(H)\leqslant r\cdot c(G_1) + c(G_2)$.
 
 We see that equality occurs only if $c(K_i)= c(B_i)$, and
 hence $K_i = B_i$, for each $i$. Thus $K$ is a subdirect subgroup of $B=B_1\times\dots\times B_r$, and $c(K) =  r\cdot c(G_1) = \sum_{i=1}^r c(B_i)$. This implies that $K=B$ by part (b), and hence that $H=G_1\wr G_2$, as desired.   
\end{proof}

\begin{remark}
Intransitive permutation groups give rise to subdirect subgroups, and imprimitive permutation groups give rise to subwreath subgroups.
\end{remark}

We use the following order bounds, from  \cites{AG, K}, on the outer automorphism group $\Out(T)$ of a nonabelian simple group $T$. 

\begin{lemma}\label{lem:out}
Let $T$ be a finite nonabelian simple group, and suppose that $T$ has a proper subgroup of index $m$. Then
\begin{enumerate}[{\rm (a)}]
  \item  either $|\Out(T)|< m/2$, or $(T, m, |\Out(T)|) = ( \alt_6, 6, 4)$, and
  \item $|\Out(T)|\leqslant\log_2|T|$.
\end{enumerate}
\end{lemma}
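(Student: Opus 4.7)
The plan is to invoke the classification of finite simple groups and verify both parts by a case analysis over the families alternating, sporadic, and Lie type; as indicated in the text, the bounds are essentially read off from \cites{AG, K}, so the proof amounts to quoting those case-by-case results in the form we need.

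For part (a), the key input is the determination of the minimal index $m$ of a proper subgroup of a nonabelian simple group $T$. For $T = \alt_n$ with $n \geq 5$, the minimal index is $n$ and $|\Out(T)| = 2$ whenever $n \neq 6$, so $|\Out(T)| = 2 < n/2$ holds for $n \geq 5$, $n \neq 6$; for $n = 6$ the minimum index is still $6$ but $|\Out(\alt_6)| = 4 > 6/2$, producing the stated exceptional triple $(\alt_6, 6, 4)$. For sporadic $T$ one reads from the Atlas that $|\Out(T)| \leq 2$ while the minimum index is at least $11$, which gives the bound trivially. For groups of Lie type over $\F_q$ with $q = p^f$, one uses the standard factorisation $|\Out(T)| = d\cdot f\cdot g$ into diagonal, field, and graph automorphism contributions, and compares with the minimum index of a proper subgroup (almost always the index of a maximal parabolic); the generic estimate $m \gg 2 d f g$ is easy except at small rank and small $q$, where one appeals directly to the tables in \cite{AG}.

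For part (b), once $|\Out(T)|$ is controlled explicitly on each family, the inequality $|\Out(T)| \leq \log_2|T|$ is a straightforward numerical check. It is immediate for alternating and sporadic groups since $|\Out(T)| \leq 4$ while $|T| \geq 60$. For Lie type groups of rank $\ell$ over $\F_{p^f}$ we have $|T| \geq p^{cf\ell}$ for an explicit constant $c \geq 1$, while $|\Out(T)| \leq dfg$ with $d,g$ bounded by a small constant depending only on the Lie type; hence $\log_2|T| \geq cf\ell \log_2 p$ easily dominates $dfg$ outside a finite list of small cases, which are checked individually.

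The main obstacle is not the generic estimates (which are quite loose) but the small-parameter verification: the bound in part~(a) can fail for groups of Lie type of small rank over tiny fields (for instance some $\PSL(2,q)$, $\PSL(3,q)$, or $\PSU(n,q)$) and must be confirmed against the tables of maximal subgroups in \cites{AG, K} or the Atlas. Since the authors cite the bounds rather than reprove them, a finished proof here would simply extract and combine the statements in \cite{AG} and \cite{K} corresponding to alternating, sporadic, and Lie type groups respectively, noting the single genuine exception in part~(a).
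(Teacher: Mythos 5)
Your proposal is correct and follows essentially the same route as the paper: part (a) is obtained by quoting \cite{AG}*{Lemma 2.7(i)} for $T\neq\alt_6$ and part (b) by quoting \cite{K}, with $\alt_6$ handled as the lone exception. The only detail worth adding is that, since $m$ is the index of an arbitrary proper subgroup rather than the minimal one, for $T=\alt_6$ you must also observe that the possible indices are $6$ or at least $10$, so that $|\Out(T)|=4<m/2$ holds whenever $m\neq 6$ and the exception is genuinely confined to the triple $(\alt_6,6,4)$.
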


\begin{proof}
If $T=\alt_6$ then $ |\Out(T)|=4$, and either $m=6$ or $m\geqslant10$. Thus part (a) holds for $\alt_6$. If $T\ne \alt_6$, then  by \cite{AG}*{Lemma 2.7(i)},  $|\Out(T)|< m/2$, so part (a) is proved. Part (b) is proved in~\cite{K}.
\end{proof}

\section{Proof of Theorem~\ref{C2}}\label{S5}

\begin{proof}[Proof of Theorem~\ref{C2}]
Let $G$ be a permutation group of degree $n$ with $r$ orbits. The proof is by induction on $n$. It is easy to check that the result holds for $n=1$.

Suppose first that $G$ is intransitive, that is $r\geqslant 2$. Let $\Omega_1,\dots,\Omega_r$ be the $G$-orbits and let $n_i=|\Omega_i|$ for each $i$.  Let $G_i$ be the permutation group induced by $G$  on $\Omega_i$. By induction, $c(G_i)\leqslant \frac43(n_i-1)$. Since $G$ is a subdirect subgroup of $G_1\times\cdots\times G_r$, it follows from Lemma~\ref{La}(b) and induction that
\begin{equation*}
  c(G)\leqslant\sum_{i=1}^r c(G_i)\leqslant\sum_{i=1}^r \frac43(n_i-1)=\frac43(n-r),
\end{equation*}
with equality only if $G=G_1\times\cdots\times G_r$ and $c(G_i)=\frac43(n_i-1)$ for each $i$. By induction,  $G_i=T_{k_i}$ for some $k_i$ satisfying $n_i=4^{k_i}$ and thus $G=T_{k_1}\times\cdots\times T_{k_r}$, as desired.

We may thus assume that $G$ is transitive. Suppose that $G$ is imprimitive and preserves a block system $\mathcal{B}:=\{B_1,\dots, B_s\}$,  where $1<s<n$. Let $G_2$ be the (transitive) permutation group  induced by $G$ on $\mathcal{B}$, and let $G_1$ be the (transitive) permutation group induced on $B_1$ by the setwise stabiliser in $G$ of $B_1$.  Then $G$ is a subwreath subgroup of $G_1\wr G_2$ and hence, by Lemma~\ref{La}(c), $c(G)\leqslant s\cdot c(G_1) + c(G_2)$. Since $G_1$, $G_2$ are transitive permutation groups of degree $n/s$ and~$s$, respectively,  it follows by induction that
\begin{equation*}
  c(G)\leqslant s\cdot c(G_1)+c(G_2)
  \leqslant\frac{4s}3\left(\frac{n}{s}-1\right)+\frac43\left(s-1\right)
  =\frac43\left(n-1\right),
\end{equation*}
with equality only if $G=G_1\wr G_2$, $c(G_1)=\frac43(\frac{n}{s}-1)$ and $c(G_2)=\frac43(s-1)$. By induction, this implies $G_1=T_{k_1}$ and $G_2=T_{k_2}$ for some integers $k_1$ and $k_2$ and thus $G=T_{k_1}\wr T_{k_2}=T_{k_1+k_2}$.

Finally, we assume that $G$ is primitive. We used a database of primitive groups of small degree (see~\cite{CQR-D}) to check that the bound is satisfied when $n\leqslant 24$ and equality holds only for $T_1=\sym_4$. We thus assume that $n\geqslant 25$. If $G$ contains the alternating group of degree $n$, then $c(G)\leqslant 2$ and again the result holds. We may thus assume that this is not the case and, by~\cite{M}*{Corollary~1.4} we have $|G|\leqslant 2^{n-1}$. This implies that $c(G)\leqslant\log_2(2^{n-1})=n-1<\frac43(n-1)$. This completes the proof of Theorem~\ref{C2}.
\end{proof}

\section{Proof of Theorem~\ref{C3}}\label{S6}

\begin{proof}[Proof of Theorem~\ref{C3}] 
Let $H\leqslant \GL(d,p^f)$, such that $H$ is completely reducible on $V=\mathbb{F}_{p^f}^d$. We fix the prime $p$ and use induction on pairs $(d,f)$ which are ordered lexicographically, where $(d_1,f_1)<(d_2,f_2)$ means $d_1<d_2$, or $d_1=d_2$ and $f_1<f_2$.  The case $d=1$ below will serve as the base of our induction. 

{\sc Case 0. $d=1$.}\quad As $\GL(1,p^f)\cong\C_{p^f-1}$ is cyclic, we have $c(H)\leqslant c(\GL(1,p^f))$. Here $d=r=1$ so it suffices to show that $c(\GL(1,p^f))\leqslant \left(\frac83\log_2 p-1\right)f-(\log_2f+\frac43)$  with equality if and only if $p=f=2$. Suppose first that $p=2$. The claim is easily verified for $f\leqslant 3$.  For $f\geqslant 4$, as $2^f-1$ is odd, we have 
\[
  c(\C_{2^f-1})\leqslant\log_3(2^f-1) < f\log_3 2<\frac53f-\log_2f-\frac43.
\]
We may thus assume that $p\geqslant 3$. One can check that, for all positive $f$, we have $\log_2f+\frac43\leqslant \left(\frac53\log_2 p-1\right)f$ and thus 
\begin{equation*}
c(\C_{p^f-1})\leqslant\log_2(p^f-1)< f \log_2 p\leqslant \left(\frac83\log_2 p-1\right)f-\left(\log_2f+\frac43\right).
\end{equation*}
This completes the proof of the case $d=1$. From now on, we assume that $d\geqslant 2$. \eoc

We divide the proof into cases
mirroring Aschbacher's classification of finite linear groups \cite{Asch}
into nine classes $\CC_1,\dots,\CC_9$. The end of a case will be denoted by $\Box$.

{\sc Case 1. $H\in\CC_1$.}\quad 
Here $H$ is reducible. As $H$ is completely reducible, it leaves invariant
a direct decomposition $V=V_1\oplus \dots\oplus V_r$ with $H$ acting
irreducibly on each of the $V_i$, and $r\geqslant 2$. Let $d_i=\dim(V_i)$
and $H_i=H|_{V_i}$. Note that $H_i$ is an irreducible subgroup of $\GL(V_i)$ and $H$ is a subdirect subgroup of $H_1\times H_2\times\dots\times H_r$. By induction, 
\begin{equation*}
 c(H_i)\leqslant\left(\frac83\log_2 p-1\right)d_i f-\left(\log_2f+\frac43\right)
\end{equation*}
for each $i$. Since $\sum_{i=1}^r d_i=d$,  Lemma~\ref{La}(b) implies $c(H)\leqslant \sum_{i=1}^r c(H_i)$ and so
\begin{equation*}
c(H)\leqslant \left(\frac83\log_2 p-1\right)d f- r\left(\log_2f+\frac43\right),
\end{equation*}
with equality if and only if $H = H_1\times\dots\times H_r$ and $c(H_i)=\left(\frac83\log_2 p-1\right)d_i f-\left(\log_2f+\frac43\right)$ for each $i$. By induction, this occurs if and only if either $p^f=2$ and each $H_i$ equals $L_{k_i}$ for some $k_i$, or $p^f=2^2$ and each $H_i$ equals $\GL(1,4)$. Since the value of $p^f$ is independent of $i$, equality holds if and only if $H$ is as in Theorem~\ref{C3}. \eoc

From now on, we assume that $r=1$, or equivalently, that $H$ is irreducible. 

{\sc Case 2. $H\in\CC_2$.}\quad  Here $H$ is an imprimitive linear group.
That is, $H$ preserves a nontrivial direct decomposition
$V=V_1\oplus\cdots\oplus V_b$, where $d=ab$, $b\geqslant 2$, and $\dim(V_i)=a$
for each $i$. Let $H_2$ be the permutation group induced by the action
of $H$ on $\{ V_1,\dots, V_b\}$ and let $K$ be the kernel of this action.
Note that $H_2$ is transitive. Since $H$ is irreducible, the setwise
stabiliser of $V_1$ in $H$ induces on $V_1$ an irreducible subgroup $H_1$
of $\GL(a,p^f)$, and $K|_{V_1}$ is normal in $H_1$. Moreover $H$ is conjugate
to a subwreath subgroup of $H_1\wr H_2$, and so by Lemma~\ref{La}(c),
$c(H)\leqslant b\cdot c(H_1) + c(H_2)$.  Since $a < d$, it follows by induction
that $c(H_1) \leqslant  \left(\frac83\log_2 p-1\right)af- \left(\log_2 f+\frac43\right)$. By Theorem~\ref{C2}, $c(H_2)\leqslant \frac43(b-1)$ hence
\begin{align*}
c(H) \leqslant b \cdot c(H_1) +c(H_2)&\leqslant  \left(\frac83\log_2 p-1\right)df- b\left(\log_2 f+\frac43\right)+\frac43(b-1)\\
     &= \left(\frac83\log_2 p-1\right)df- \left(b\log_2 f +\frac43\right).
\end{align*}
As $r=1$, this expression is less than or equal to the upper bound
in~\eqref{E3}. Suppose now that equality holds. This implies that
$b\log_2 f=\log_2 f$ and thus $f=1$. Equality holding also implies
that $c(H_2)= \frac43(b-1)$ which,  by Theorem~\ref{C2}, implies $H_2=T_{k_2}$
for some $k_2\geqslant1$. Similarly, we must have
$c(H_1) =  \left(\frac83\log_2 p-1\right)af- \left(\log_2 f+\frac43\right)$.
Since $H_1$ is irreducible, it follows by induction that $p^f=2$ and
$H_1=L_{k_1}$ for some $k_1\geqslant0$. Finally, Lemma~\ref{La}(c) implies that
\[
  H=H_1\wr H_2=(\GL(2,2) \wr T_{k_1})\wr T_{k_2}=\GL(2,2) \wr (T_{k_1}\wr T_{k_2})
  =\GL(2,2) \wr T_{k_1+k_2}=L_{k_1+k_2}. \Box
\]

From now on, we assume that $H$ is a primitive linear group.

{\sc Case 3. $H\in\CC_3$.}\quad 
In this case, $H$ preserves on $V$ the structure of a $b$-dimensional
vector space $V'$ over a field of order $p^{fa}$, where $d=ab$ and $a\geqslant 2$.
Note that $H$ is conjugate to a subgroup of
$\GaL(b,p^{fa}) = \GL(b,p^{fa})\rtimes\C_a$.
Let $K= H\cap \GL(b,p^{fa})$. Then $H/K\leqslant \C_a$ and
$c(H)=  c(K)+c(H/K) \leqslant  c(K)+\log_2 a$. By Clifford's
Theorem \cite{LP}*{Theorem 3.6.2},  $K$ acts completely reducibly on $V$,
and by \cite{LP}*{Theorem 1.8.4}, $K$ acts completely reducibly on $V'$,
say with $r'$ irreducible constituents.  Since $b\leqslant d/2$, the inductive
hypothesis yields
\begin{align*}
 c(K)&\leqslant\left(\frac83\log_2 p-1\right)b (fa)- r' \left(\log_2(fa)+\frac43\right)\\
    &=\left(\frac83\log_2 p-1\right)df- r' \left(\log_2 (fa)+\frac43\right)
\end{align*}
and thus
\begin{align*}
 c(H)&\leqslant\left(\frac83\log_2 p-1\right)df- r' \left(\log_2 (fa)+\frac43\right)+\log_2 a\\
 &=\left(\frac83\log_2 p-1\right)df- r' \left(\log_2 f+\frac43\right)- (r'-1)\log_2 a.
\end{align*}
As $r'\geqslant1$, the required inequality \eqref{E3} for $c(H)$ follows
from this. Suppose now that equality holds. It follows that $r'=1$ and
$c(K)=\left(\frac83\log_2 p-1\right)b (fa)- r' \left(\log_2(fa)+\frac43\right)$.
Since $a\geqslant2$ and $b<d$, induction yields that $K=\GL(1,4)$, so $b=1$
and $p^{fa}=2^2$, which implies that $(p,f,a)=(2,1,2)$. Thus $d=ab=2$,
$p^f=2$, $H/K=\C_2$ and $H \cong \GaL(1,4)$ so $H= \GL(2,2) = L_0$.
This concludes the proof in the extension field case. \eoc

We subsequently assume that $H$ preserves no extension field structure on $V$.
Hence $H$ is absolutely irreducible. When \eqref{E3} holds
strictly, as below, equality is impossible.

{\sc Case 4. $H\in\CC_4$.}\quad  Here $H$ is tensor decomposable.
That is, $H$ preserves a decomposition $V=U\otimes W$, where
$a:=\dim(U)\geqslant 2$, $b:=\dim(W)\geqslant 2$, and $d=ab$. We allow $a=b$. Thus
$H\leqslant \GL(U)\circ\GL(W)$, and $H$ projects onto irreducible subgroups
of $H_1\leqslant\GL(U)$ and $H_2\leqslant\GL(W)$.  Hence $H/\Z(H)$ is
a subdirect subgroup $H_1\times H_2\leqslant\GL(a,p^f)\times\GL(b,p^f)$.
By Lemma~\ref{La}(b) we have $c(H) \leqslant c(H_1\times H_2)$. 
It follows by induction that
\begin{align*}
 c(H)&\leqslant \left(\frac83\log_2 p-1\right)af- \left(\log_2 f+\frac43\right) + \left(\frac83\log_2 p-1\right)bf- \left(\log_2 f+\frac43\right)\\
     &= \left(\frac83\log_2 p-1\right)(a+b)f- 2\left(\log_2 f +\frac43\right)\\
 &< \left(\frac83\log_2 p-1\right)(ab)f- \left(\log_2 f +\frac43\right).
 \hspace{70mm}\Box
\end{align*}

Assume now that Case~4 does not apply.
As the $\CC_7$ case is similar to $\CC_4$ case, we treat it
next, and out of order.

{\sc Case 7. $H\in\CC_7$.}\quad Here $H$ is tensor imprimitive
and tensor indecomposable. Therefore $H$ preserves a decomposition
$V=V_1\otimes\dots\otimes V_b$, where $d=a^b$, $a\geqslant 2$, $b\geqslant 2$,
and $\dim(V_i)=a$ for each $i$. Then $H\leqslant K\rtimes\sym_b$,
where $K=\GL(V_1)\circ\dots\circ\GL(V_b)$ contains the scalars
$Z\cong\C_{p^f-1}$ and $K/Z = \PGL(a,p^f)^b$. Since $H$ is not tensor
decomposable, $H/(H\cap K)\cong HK/K$ is a transitive subgroup of $\sym_b$,
and so by Theorem~\ref{C2},   $c(H/(H\cap K))\leqslant \frac43(b-1)$. 
The subgroups $H_i$ of $\PGL(V_i)$ induced by $H\cap K$ are permuted
transitively by $H$. Hence the $H_i$ are irreducible and pairwise
isomorphic. Induction and Lemma~\ref{La}(c) imply
\begin{align*}
c(H) &\leqslant  b\left( \left(\frac83\log_2 p-1\right)af- \log_2 f-\frac43 \right)+\frac43(b-1)\\
     &= \left(\frac83\log_2 p-1\right)(ab)f- \left(b\log_2 f +\frac43\right)\\
     &< \left(\frac83\log_2 p-1\right)(a^b)f- \left(\log_2 f +\frac43\right).
\end{align*}
The final inequality uses the fact that $ab\leqslant a^b$ for $a,b\geqslant2$. In summary, the desired bound holds strictly, when $H$ is tensor imprimitive. \eoc

{\sc Case 5. $H\in\CC_5$.}\quad  Here $H$ is realisable over a proper subfield,
modulo scalars. That is, $f=ab$ with $b\geqslant2$ and we may assume that
$H\leqslant \C_{p^f-1}\circ\GL(d,p^a)$ where the subgroup $\C_{p^f-1}$ of
non-zero $\mathbb{F}_{p^f}$-scalars  meets $\GL(d,p^a)$ in the subgroup
of  $\mathbb{F}_{p^a}$-scalars.  Moreover, $N:=H\cap\GL(d,p^a)$ is normal
in $H$ and
\[
  \frac{H}{N}=\frac{H}{H\cap\GL(d,p^a)}\cong\frac{H\GL(d,p^a)}{\GL(d,p^a)}\leqslant\frac{\C_{p^f-1}\circ\GL(d,p^a)}{\GL(d,p^a)}\cong \frac{\C_{p^f-1}}{\C_{p^a-1}}.
\]
Therefore $|H/N|\leqslant(p^f-1)/(p^a-1)< p^f/(\frac12 p^a)=2p^{f-a}$. Since $N$ is
an irreducible subgroup of $\GL(d,p^a)$ and $a<f$, the inductive
hypothesis gives:
\begin{align*}
  c(H) &= c(N) + c(H/N) < c(N)+\log_2(2p^{f-a})\\
  &\leqslant\left(\frac83\log_2 p-1\right)da-\left(\log_2a+\frac{4}{3}\right)
  +(f-a)\log_2p+1. 
\end{align*}
In order to prove the desired bound~\eqref{E3} with strict inequality,
it suffices to show
\begin{align}\label{E4}
  \log_2f-\log_2a+(f-a)\log_2p+1\leqslant \left(\frac83\log_2 p-1\right)d(f-a).
\end{align}
Since $1\leqslant a\leqslant f/2$ and $2\leqslant d$ we have $f\leqslant d(f-a)$ and
it suffices to show
\begin{align}\label{E5}
  \log_2f+(f-1)\log_2p+1\leqslant\left(\frac83\log_2 p-1\right)f.
\end{align}
This is true when $p=2$ since $\log_2f\leqslant 2f/3$ for $f\geqslant1$.
Suppose now that $p\geqslant3$.
Since $\log_2f\leqslant f-1$ and $2\leqslant \frac53\log_2 p$, we see that $c(H)$ is strictly less than the bound in~\eqref{E3}. \eoc

From now on assume Case 5 does not apply.

{\sc Case 6. $H\in\CC_6$.}\quad  Here $H$ is of symplectic type. Thus
$d=s^a$ where~$s$ is a prime dividing $p^f-1$, and
$H\leqslant \C_{p^f-1}\circ S\ldotp H_0$, where $S$ is an extraspecial group of
order $s^{1+2a}$ whose center $\C_s$ is amalgamated in $\C_{p^f-1}\circ S$,
and $H_0\leqslant\Sp(2a,s)$. By \cite{PSer}*{Table 4},
$|\Sp(2a,s)|\leqslant s^{2a^2+a}$ so $c(H_0)\leqslant  (2a^2+a)\log_2s$
and $c(H)< f\log_2p + 2a + (2a^2+a)\log_2s$.  For convenience
set $z=2a+(2a^2+a)\log_2 s$. We want to show that
\[
  \left(\frac{8}{3}\log_2 p-1\right)s^af-\log_2 f- \frac{4}{3}\geqslant f\log_2 p+z.
\]
Assume to the contrary this does not hold, that is to say,
\begin{equation}\label{eqeq2}
  \left(\frac{8}{3}s^af-f\right)\log_2 p-s^af-\log_2 f- \frac{4}{3}< z.
\end{equation}
Since $\log_2 p\geqslant 1$ and $\frac{4f}{3}\geqslant \log_2 f+\frac{4}{3}$, equation
\eqref{eqeq2} implies
\begin{equation}\label{eqeq1}
  \frac{5}{3}s^af- \frac{7}{3}f< z.
\end{equation}
As $f\geqslant 1$, \eqref{eqeq1} implies $\frac{5}{3}s^a- \frac{7}{3}<  z$
which, in turn, implies $s^a\in\{2, 2^2,2^3,2^4,2^5,3,3^2,5,7\}$.
For fixed $s^a\geqslant 2$ (and thus fixed $z$),  \eqref{eqeq1} is a linear
inequality with only finitely many solutions in $f$. Similarly, for
fixed $s^a$ and $f$, \eqref{eqeq2} is a linear inequality with only
finitely many solutions in $\log_2 p$. It is thus routine to
find all the solutions to \eqref{eqeq2} with $p$ and $s$ primes,
$f\geqslant 1$ and $s^a\geqslant 2$. (This can also easily be automated.)
The solutions are:
\begin{align*}
  (s^a,p^f)\in\{&(2,2),(2,3),(2,2^2),(2,2^3),(3,2),(3,2^2),(3^2,2),\\
  &(2^2,2),(2^2,3),(2^2,2^2),(2^3,2),(2^3,3),(2^3,2^2),(2^4,2),(2^5,2),(5,2)\}.
\end{align*}
Since $s$ divides $p^f-1$, the possibilities reduce to
$(s^a,p^f)\in\{(2,3),(3,2^2),(2^2,3),(2^3,3)\}$. Finally, using~\cite{BHR-D}
we can determine  the $\CC_6$-subgroups of $\GL(2,3)$, $\GL(3,4)$, $\GL(4,3)$
and $\GL(8,3)$; in all these cases, the bound~\eqref{E3} holds strictly.
This concludes the $\CC_6$ case.  \eoc

From Cases 1--7 we may assume that $H\not\in\CC_1\cup\cdots\cup\CC_7$.
We now define groups $X$ and $Y$ satisfying $X\leqslant H\leqslant Y$ depending on the
nature of the form preserved by $H$. In Case~8 we treat the case where $X\leqslant H\leqslant Y$.
\begin{enumerate}[{\rm (a)}]
\item $H$ preserves no non-degenerate Hermitian, alternating or quadratic
  form on $V$, up to scalar multiplication. Here $d\geqslant2$, $X=\SL(d,p^f)$
  and $Y=\GL(d,p^f)$.
\item $H$ preserves, a non-degenerate
  Hermitian form on $V$ modulo scalars. Here $d>2$, $f$ is even\footnote{Some authors use the notation $\SU(d,p^{f/2})$  and $\GU(d,p^{f/2})$ writing the square root of the field size.}, $X=\SU(d,p^f)$
  and $Y=\C_{p^f-1}\circ\GU(d,p^f)$.
\item $H$ preserves, modulo scalars, a non-degenerate
  alternating form but no non-degenerate quadratic form on $V$. Here $d\geqslant4$
  is even, $X=\Sp(d,p^f)$ and $Y=\C_{p^f-1}\circ\GSp(d,p^f)$.
\item $H$ preserves, modulo scalars, a non-degenerate
  quadratic form on $V$. Here $d>2$, $X=\Omega^\varepsilon(d,p^f)$,
  where $\varepsilon=\pm$ if $d$ is even and $\varepsilon=\circ$ if $d$
  is odd and $Y=\C_{p^f-1}\circ\textup{GO}(d,p^f)$. If $d$ is odd we
  additionally assume that $q$ is odd, since  $X$ is irreducible on $V$.
\end{enumerate}

{\sc Case 8. $X\leqslant H\leqslant Y$.}\quad
First we consider the case where the derived group $X'$ modulo scalars is not
a nonabelian simple group.  Then $X$ is one of: (i) $\SL(2,2)$, (ii) $\SL(2,3)$,
(iii) $\SU(3,2^2)$, (iv) $\Omega(3,3)$, or (v) $\Omega^+(4,p^f)$.
(i)~If $X=\SL(2,2)$, then $Y=\GL(2,2)$ is soluble and the upper bound
of \eqref{E3} holds strictly if $H<Y$, and exactly when $H=Y=L_0$.
(ii)~If  $X=\SL(2,3)$, then $Y=\GL(2,3)$ is
soluble, so $c(H)\leqslant c(\GL(2,3))=5$ which is strictly less than the
upper bound in \eqref{E3}.
(iii) If $X= \SU(3,2^2)$, then $|Y|=|\GU(3,2^2)|=2^3\cdot 3^4$
and hence $c(H)\leqslant c(Y)=7$, and \eqref{E3} holds strictly as
$7< \frac{23}{3}$.  (iv) If $X= \Omega(3,3)$, then
$|Y|=|\{\pm 1\}\times \SO(3,3)|=2^4\cdot3$ and hence $c(H)\leqslant c(Y)=5$,
again, \eqref{E3} holds strictly as $5< 8\log_23-\frac{13}{3}$.
(v) Finally, suppose that $X=\Omega^+(4,p^f)$, and note that $X$ modulo
scalars is the direct product $\overline{X} :=\PSL(2,p^f)\times \PSL(2,p^f)$,
and $H\leqslant \C_{p^f-1}.\overline{X}.\D_8$  if $p$ is odd, and
$H\leqslant \C_{p^f-1}.\overline{X}.\C_2$ if $p=2$. If $p^f=2$ or $3$, then
$H\leqslant (\sym_3\times \sym_3).\C_2$ or $\C_2.(\alt_4\times \alt_4).\D_8$,
and hence $c(H)$ is at most  $5$ or $10$, respectively. In each case
this is strictly less than the upper bound in \eqref{E3}. Suppose now
that $p^f\geqslant4$. Since $H$ contains $X$, we have
$c(H)\leqslant c(\C_{p^f-1}) + c(\overline{X}) + c(\D_8) <   f\log_2p + 5$.
This expression is less than
$\left(\frac83\log_2 p-1\right)(4f)- \log_2 f-\frac43$ for all $p^f\geqslant4$.

For all the other cases $X$, modulo scalars, is a nonabelian simple
group $\overline{X}$ and using information from \cite{KL}*{Table 2.1.C}
about $\Out(\overline{X})$, we see that
$c(H) < 2f\log_2p +\log_2f + 3$, and this is at most
$\left(\frac83\log_2 p-1\right)df- \log_2 f-\frac43$ if and only if
\begin{equation*}
  \frac{2\log_2f + 13/3}{f} \leqslant \frac{8d-6}{3}\log_2p - d.  
\end{equation*}
The left side is at most $13/3$ (taking $f=1$) while the right side is at least $(5d-6)/3$ (taking $p=2$), and $13/3\leqslant (5d-6)/3$ provided $d\geqslant4$. Similarly considering the value of the right side for $p=3$, we see that the inequality holds for all $d\geqslant3$ when $p\geqslant3$. This leaves the cases $d=2$ and $(d,p)=(3,2)$. Suppose first that $(d,p)=(3,2)$. The inequality is easily  seen to hold for $f\geqslant3$. Thus $f\leqslant2$ and $H= X = \SL(3,2^f)$ or $\SU(3,4)$,  so $c(H)\leqslant3$ and the bound~\eqref{E3} holds strictly. 
Finally let $d=2$, so $p^f\geqslant4$. Here $c(H)<  2f\log_2p + 1$, and this is strictly less than the upper bound in \eqref{E3} for all $p^f\geqslant4$. This concludes the proof in Case~8. \eoc

We are now in the case where $X\not\leqslant H\leqslant Y$ where the subgroups
$X$ and $Y$ are defined in the preamble to Case~8.  We apply the
Aschbacher classification~\cite{Asch} of subgroups of $Y$ which do not
contain $X$.  Given our analysis above, and by the definition of $X$, the
only remaining possibility is $H$, modulo scalars, is almost simple; and
its quasisimple normal subgroup $S$ is absolutely irreducible and primitive
on $V$. This case is called type $\CC_9$.

{\sc Case 9. $H\in\CC_9$.}\quad
In this final case, $H$ has a quasisimple normal subgroup $S$ which is
absolutely  irreducible and primitive on $V$. Thus, if  $Z$ is the
subgroup of scalars in $H$ and $T := S/(S\cap Z)$, then
$T\lhdeq H/Z \leqslant \Aut(T)$ and $T$ is a nonabelian simple group.
Therefore $c(H)=c(S\cap Z)+c(T)+c(H/S) < f\log_2 p + 1 + c(\Out(T))$.

Suppose first that $c(\Out(T))\leqslant 2$ and thus $c(H) < f\log_2 p + 3$. It is not hard to show that this is less than the upper bound in $\eqref{E3}$, unless $(d,p^f)=(3,2)$ or $d=2$. Since $\GL(3,2)$ contains no $\CC_9$-subgroups, we must have $d=2$. Here $T=\alt_5$, $p^f\geqslant4$, and $\Out(T)=\C_2$. Thus  $c(H) < f\log_2 p + 2$, and this  is less than the upper bound in $\eqref{E3}$.

We may therefore assume that $c(\Out(T))\geqslant 3$. By considering the possibilities for $T$ when $d=3$, we can then exclude the case $d=3$ and so we assume that $d\geqslant 4$.
The fact that $c(\Out(T))\geqslant 3$ implies (using the  classification of finite simple groups) that $T$ is a  simple group of Lie type (keeping in mind the exceptional isomorphism $\alt_6\cong\PSL(2,9)$). It follows from~\cite{Lieb}*{Theorem 4.1} that $|H/Z|\leqslant p^{3fd}$. By Lemma~\ref{lem:out}(b),  $|\Out(T)|\leqslant \log_2|T|$, and so $|\Out(T)|\leqslant 3fd\, \log_2p$ which yields 
\[
c(H) < f\log_2p + 1 +  \log_2(3df) + \log_2\log_2p.
\]
Suppose that the following inequality holds
\begin{equation}\label{newnewnew}
f\log_2p + 1 +  \log_2 3+\frac{df}{2} + \log_2\log_2p\leqslant  \left(\frac83\log_2 p-1\right)df-\frac {f}{2}-\frac43.
\end{equation}
Using~\eqref{newnewnew} and the fact that $\log_2 f\leqslant \frac {f}2$ for $f\ne3$, we see that, for $f\ne3$,
\begin{align*}
  c(H)<f\log_2p + 1 +  \log_2 3+\frac{df}{2} +  \log_2\log_2p
    &\leqslant \left(\frac83\log_2 p-1\right)df-\frac {f}{2}-\frac43\\
    &\leqslant \left(\frac83\log_2 p-1\right)df-\log_2 f-\frac43,
\end{align*}
as required. Consider the case when $f=3$. Since $d\geqslant4$ and $p\geqslant2$ we have
\[
  \frac73+2\log_23\leqslant 11\leqslant\left(\frac{7d}{2}-3\right)\log_2 p -  \log_2\log_2p
    +\frac{9d}{2}\left(\log_2 p-1\right).
\]
Rearranging gives the desired bound
\[
  c(H)<3\log_2 p+1+\log_23+\frac{3d}{2} + \log_2\log_2p
  \leqslant\left(\frac83\log_2 p-1\right)3d-\log_23-\frac43.
\]

Thus it remains to assume the opposite of~\eqref{newnewnew}. This is equivalent to 
\begin{equation}\label{newnewnew2}
 0> \frac83df\log_2 p-f\log_2p  -\frac{3df}{2} -\frac{f}{2}-\frac73-\log_2 3 -  \log_2\log_2p.
\end{equation}
For fixed $d$ and $f$, the right side of~\eqref{newnewnew2} is an increasing function of $p$. Similarly, for fixed $f$ and $p$, the right side of~\eqref{newnewnew2} is an increasing function of $d$. Setting $p=2$ and $d=4$ shows
\begin{align*}
  &\frac83df\log_2 p-f\log_2p  -\frac{3df}{2} -\frac{f}{2}-\frac73-\log_2 3 -  \log_2\log_2p\\
  &\geqslant \frac{32f}3-f -6f -\frac{f}{2}-\frac73-\log_2 3\\
  &=\frac{19f}{6}-\frac73-\log_2 3.
\end{align*}
However, $\frac{19f}{6}-\frac73-\log_2 3\geqslant0$ for $f\geqslant2$ contrary
to~\eqref{newnewnew2}. Hence the solutions to~\eqref{newnewnew2} have $f=1$.
It is easy to check that~\eqref{newnewnew2} is satisfied for
$(d,p,f)=(4,2,1)$ only. One can then check that, if
$H\leqslant \GL(4,2)\cong \alt_8$, then $c(\Out(T))\leqslant 2$. This case was
handled earlier.
This final argument completes the proof of both the $\CC_9$ case, and
Theorem~\ref{C3}. 
\end{proof}

\section{Proofs of Theorems~\ref{C1} and~\ref{theo:na}}
\label{sec:last}
\begin{proof}[Proof of Theorem~\ref{theo:na}]
  The degree of a non-affine primitive group is at least $5$, and if $G$ is
  such a group of degree $5$ then $G=\alt_5$ or $\sym_5$, $c(G)\leqslant 2$,
  and the bound  $c_{\rm na}\log_2n -\frac{4}{3}$ equals
  $\frac{10}{3\log_25} \log_25 - \frac{4}{3} = 2$. Thus
  $c(G)\leqslant c_{\rm na}\log_2n-\frac{4}{3}$ holds with equality if and only if
  $G=\sym_5$, so the result holds if $n=5$. 

  Assume now that $n>5$ and that  Theorem~\ref{theo:na} holds for groups
  of degree less than~$n$. Let $G$ be a non-affine primitive permutation
  group of degree~$n$.  We first treat the cases: almost simple, and
  simple diagonal (which includes both types {\sc HS} and {\sc SD}
  in the type  descriptions in \cite{P}*{Section 3}). Then we treat
  all other cases together since in these remaining cases $G$ is
  contained in a wreath product in product action. (This includes
  types  {\sc HC, TW, CD} and {\sc PA} as described in \cite{P}*{Section 3}.)

  {\sc Case 1. Almost Simple.}\quad In this case, $T\lhdeq G\leqslant\Aut(T)$,
  where $T$ is a nonabelian simple group. Suppose first that $T=\alt_6$.
  Then $c(G)\leqslant3$. If $n\geqslant10$, then it follows that
  $c_{\rm na}\log_2n -\frac{4}{3} = \frac{10}{3\log_25} (\log_25+1) -\frac{4}{3} >3\geqslant c(G)$.
  On the other hand if $n<10$ then $n=6$, $c(G)\leqslant 2$ (since then
  $G\leqslant\sym_6$), and  $c_{\rm na}\log_2n-\frac{4}{3}> 2$. Thus
  Theorem~\ref{theo:na} holds with strict inequality if $T=\alt_6$.
  Suppose now that $T\ne\alt_6$. Then by Lemma~\ref{lem:out},  
\begin{align*}
c(G)&\leqslant 1+ c(\Out(T)) \leqslant 1+ \log_2|\Out(T)|< 1+\log_2(n/2)=\log_2 n.
\end{align*}
If $n\geqslant11$ then
$\log_2n < 1.4\log_2n-\frac{4}{3} < c_{\rm na}\log_2n-\frac{4}{3}$, as required.
So assume that $6\leqslant n\leqslant 10$. For these degrees the possible almost
simple groups are known and in all case  $c(G)\leqslant 2$ (since $T\ne\alt_6$),
which is strictly less than $ c_{\rm na}\log_2n-\frac{4}{3}$.

{\sc Case 2. Simple Diagonal.}\quad In this case, the socle $N$ of $G$
(the product of the minimal normal subgroups) has the form $N=T^k$,
where $T$ is a nonabelian simple group,  $k\geqslant 2$, and $n=|T|^{k-1}$.
Further  $G/N$ is isomorphic to a subgroup $H$ of $\Out(T)\times \sym_k$.
Thus $H$ is a subdirect subgroup of $H_1\times H_2$, for some
$H_1\leqslant\Out(T)$ and $H_2\leqslant \sym_k$. Furthermore, either $H_2$ is a
transitive subgroup of $\sym_k$, or $k=2$ and $H_2=1$ (for types
{\sc HS} and {\sc SD} respectively).   In either case,
$c(H_2)\leqslant \frac43(k-1)$, by Theorem~\ref{C2}, and $c(H)\leqslant c(H_1)+c(H_2)$
by Lemma~\ref{La}(b). Moreover, by Lemma~\ref{lem:out}(a),
$|\Out(T)| < |T|/16$, (since $T$ has a subgroup of order at least $8$ and
hence of index at most $|T|/8$). Thus
\begin{align*}
c(G)&= k+c(H)\leqslant k+c(H_1)+c(H_2)\overset{\ref{C2},\ref{lem:out}}{\leqslant} k+\log_2\left(\frac{|T|}{16}\right)+\frac43(k-1)\\
    &=\frac73k - 4 + \log_2|T|  - \frac43 = \frac{7k-12}{3} + \log_2 |T| - \frac43.
\end{align*}
Since $c_{\rm na} > 1.4$, it is sufficient to prove that this expression is at most 
\[
1.4 \log_2 n - \frac43 = 1.4 (k-1)\log_2|T| -\frac43 =  \frac{7k-12}{5}\log_2|T|    + \log_2 |T| -\frac43.
\]
This is true since $\log_2|T| \geqslant 5/3$. Hence the bound holds strictly.

{\sc Case 3. Product Action.}\quad In this final case  $n=a^b$ with $b\geqslant2$, and $G$ is a subwreath subgroup of $A\wr B$, where  $B$ is a transitive permutation group of degree $b$ and $A$ is a primitive permutation  group of degree $a$. Moreover, either $A$ is  almost simple (for $G$ of type {\sc PA}),  or of simple diagonal type (namely of type {\sc SD}  if $G$ has type {\sc CD}, and of type {\sc HS} if $G$ has type {\sc HC} or {\sc TW}). In either of these cases, it follows by induction and Cases~1 and~2 that $c(A) \leqslant c_{\rm na} \log_2 a-\frac43$. Also, by Lemma~\ref{La}(c), $c(G)\leqslant b\cdot c(A)+c(B)$. Thus by Theorem~\ref{C2},
\begin{equation*}
c(G)\leqslant b\cdot c(A)+c(B)\leqslant b\left(c_{\rm na}\log_2a-\frac43\right)+\frac43(b-1)= b c_{\rm na}\log_2a-\frac43 =c_{\rm na}\log_2 n-\frac43,
\end{equation*}
and equality holds if and only if all of the following hold: 
\[
c(A)= c_{\rm na}\log_2a-\frac43,\quad  c(B)=\frac43(b-1)\quad \mbox{and}\quad  c(G)= b\cdot c(A)+c(B).
\] 
By induction,  Theorem~\ref{C2}, and Lemma~\ref{La}(c), it follows that $G=A\wr B$, $b=4^k$  and $B=T_k$ for some $k\geqslant1$ (since $b>1$), and $A$ is one of the groups listed in  Theorem~\ref{theo:na}. Since $A$ is  almost simple or of simple diagonal type it follows that $a=5$ and $A=\sym_5$. Thus $n=5^{4^k}$ and $G=\sym_5\wr T_k$ in product action. This completes the proof of  Theorem~\ref{theo:na}.
\end{proof}

\begin{proof}[Proof of Theorem~\ref{C1}]
  For $n\leqslant4$, the result can be checked by inspection. Note that,
  for $n=4$, the bound is met by $G=P_1= \sym_4$.  Henceforth assume
  that $n\geqslant 5$, that $G$ is a primitive permutation group of degree $n$.
  If $G$ is non-affine then, by Theorem~\ref{theo:na},
  $c(G)\leqslant c_{\rm na}\log_2n -\frac43$. Since $c_{\rm na} < \frac83$,
  Theorem~\ref{C1} holds with a strict inequality in this case.

  Thus we may assume that $G$ is of affine type, so  $n=p^d$ for some
  prime $p$ and integer $d\geqslant1$, and  $G=(\C_p)^d\rtimes H$ where  $H$
  is an irreducible subgroup of $\GL(d,p)$. Thus by Theorem~\ref{C3},
  $c(H)\leqslant (\frac83\log_2 p-1)d-\frac43$, and  therefore
\begin{equation*}
  c(G)=d+c(H)\overset{\ref{C3}}{\leqslant} d+\left(\frac83\log_2 p-1\right)d-\frac43=\frac{8d}3\log_2 p-\frac43=\frac{8}3\log_2n-\frac43.
\end{equation*}
Moreover, by Theorem~\ref{C3} (and since $H$ is an irreducible linear group over the field $\F_p$), equality occurs if and only if $p=2$, $d=2^{2k+1}=2\cdot4^k$ for some $k\geqslant0$, and $H$ is linearly isomorphic to $L_k$
(recall $n=p^d\ne 4$). Thus $n=2^d = 4^{4^k}$ with $k\geqslant1$, and 
\[
  G=\C_2^{2\cdot 4^k}\rtimes L_k=(\C_2^2)^{4^k}\rtimes (\GL(2,2)\wr T_k)
  =(\C_2^2 \rtimes \GL(2,2))\wr T_k\cong \sym_4\wr T_k=P_k.\qedhere
\]
\end{proof}

\section{Proof and examples for Theorem~\ref{theo:qpsp}}\label{sec:semi}

\begin{proof}
  Let $G\leqslant\Sym(\Omega)$ with $n=|\Omega|$.

  (a) Suppose first that $G$ is quasiprimitive but not primitive.
  Let $\Delta$ be a system of maximal (proper) blocks of imprimitivity
  for $G$ in $\Omega$, and let $d=|\Delta|$. Then
  $d\leqslant n/2$ and $d\mid n$ since $G$ is imprimitive.
  Also, $G^\Delta$  is primitive as $\Delta$ is maximal. Since $G$ is quasiprimitive, the kernel of the action
  of $G$ on $\Delta$ is trivial, and so $G\cong{}G^\Delta$. Thus $G$ and
    $G^\Delta$ have isomorphic socles. If $G$ were of affine type, then
    $\textrm{soc}(G)$ would be abelian and regular. As
    $\textrm{soc}(G^\Delta)$ is abelian, it is regular on $\Delta$. This proves
    that $n=|\textrm{soc}(G)|=|\textrm{soc}(G^\Delta)|=d$, a contradiction.
  Thus $G$ is non-affine, and so, by Theorem~\ref{theo:na}, we have the required bound 
\begin{equation*}
c(G)=c(G^\Delta) 
 \leqslant{}c_{\rm na}\log_2{d}-\frac{4}{3}
    \leqslant{}c_{\rm na}\log_2{\frac{n}{2}}-\frac{4}{3}
    =c_{\rm na}(\log_2{n}-1)-\frac43.
\end{equation*}
 
(b)~Let $G\leqslant\Sym(\Omega)$ be semiprimitive but not quasiprimitive.
As $G$ is not quasiprimitive, $G$ must have a nontrivial intransitive
normal subgroup. Let $M$ be a maximal such normal subgroup of $G$,
let $\Sigma$ be the set of $M$-orbits, and let $m=|M|$. Since $G$
is semiprimitive and $M$ is intransitive, we have $G^\Sigma\cong G/M$
by \cite{BM}*{Lemma 2.4}. 
We now show that $G^\Sigma$ is quasiprimitive.
Suppose $N^\Sigma\lhdeq G^\Sigma$ where $N\lhdeq G$. If $N\leqslant M$, then
$N^\Sigma$ is trivial. If $N\not\leqslant M$, then $M<NM\lhdeq G$, and by
the maximality of $M$, $NM$ is transitive on $\Omega$, and hence
$N^\Sigma$ is transitive on $\Sigma$. Therefore $G^\Sigma$ is
quasiprimitive. Using the argument in the previous paragraph, $G^\Sigma$ is isomorphic
to a primitive permutation group of degree dividing $|\Sigma|$.

Since $M$ is an intransitive normal subgroup of the semiprimitive
group $G$, $M$ is semiregular, and hence $|\Sigma|=|\Omega|/|M|=n/m$.
By the previous paragraph, $G^\Sigma$ is isomorphic to a primitive permutation group of
degree $r$ dividing $n/m$. By Theorem~\ref{C1}, 
$c(G^\Sigma)\leqslant{}\frac{8}{3}\log_2 r -\frac{4}{3}$.
%\[
%  c(G^\Sigma)\leqslant{}\frac{8}{3}\log_2{\frac{n}{m}}-\frac{4}{3}.
%\]
As $|M|=m\geqslant 2$, the desired bound is proved as~follows
\begin{align*}    c(G)&=c(G/M)+c(M)=c(G^\Sigma)+c(M)\leqslant{}\frac{8}{3}\log_2 r-\frac{4}{3}+c(M)\\
    &\leqslant{}\frac{8}{3}\log_2\left(\frac{n}{m}\right)-\frac{4}{3}+\log_2{m}
     ={}\frac{8}{3}\log_2{n}-\frac{4}{3}-\frac{5}{3}\log_2{m}
     \leqslant\frac{8}{3}\log_2{n}-\frac{9}{3}.\qedhere
\end{align*}
\end{proof}

\begin{remark}\label{rem:semiprim}
 We claim that, if equality holds in Theorem~\ref{theo:qpsp}(b), 
 then $n$ is a power of $2$, $G$ 
  is a $\{2,3\}$-group  (and hence soluble), and each plinth of $G$ is a $2$-group
   (see the definition after Theorem~\ref{theo:qpsp} and~\cite{Semiprimitive}*{Corollary~3.11}). 
    Suppose that equality holds in Theorem~\ref{theo:qpsp}(b) and hence
  in the displayed equation above. 
  To start with, this means that $r=n/m$ and hence 
  that $G^\Sigma$ is a primitive permutation group of degree $n/m$. 
Moreover equality must hold in Theorem~\ref{C1} for $G^\Sigma$. Thus $G^\Sigma=P_k$
  for some $k$ and $G^\Sigma$ is of affine type where $n/m=4^{4^k}$.
  Furthermore equality holding implies that  $c(M)=\log_2m$, so $M$ is
  a 2-group and $m$ is a $2$-power.  Thus  $n$ is a $2$-power, and $G$ 
  is a (soluble) $\{2,3\}$-group.  Let $K$ be an arbitrary plinth of $G$, 
  and let $L$ be a normal subgroup of $G$, properly contained in $K$, 
  and maximal respect to these properties. By the definition of a plinth, $L$ is 
  intransitive and hence semiregular. Hence $L$ is a $2$-group since $n$ is a 
  $2$-power. Also it follows from the maximality of $L$ that $K/L$ is a transitive 
  minimal normal subgroup of $G/L$, and acts faithfully on the set of, say $r$, $L$-orbits
  in $\Omega$. Now $r = n/|L|$, and so $r$ is a $2$-power. Then since $G/L$ is 
  soluble, its transitive minimal normal subgroup $K/L$ must be an elementary 
  abelian group of $2$-power order $r$. Hence $K$ is a $2$-group, proving the claim.
 \end{remark}

\begin{example}\label{ex:semiprim}
  We will construct infinitely many groups $H_0, H_1,\dots$, for which
  the bound in Theorem~\ref{theo:qpsp}(b) is attained.
    %It follows from
    %the proof of this bound that the only groups attaining the bound
    %must have soluble {\color{blue} WHY?} plinths.
    %If the quasiprimitive group $G^\Sigma$ in the proof is not affine then,
    %by Theorem~\ref{theo:na} and Theorem~\ref{theo:qpsp}(a),  the
    %inequality $c(G^\Sigma)\leqslant{}\frac{8}{3}\log_2{\frac{n}{m}}-\frac{4}{3}$
    %would be replaced by
    %$ c(G^\Sigma) \leqslant{}c_{\rm na}\log_2{\frac nm}-\frac{4}{3}$ yielding
    %$c(G) < c_{\rm na}\log_2{n}-2<\frac{8}{3}\log_2{n}-3$.}
  
Consider $\GL(2,3)$ as a permutation group of degree $8$ on the set $\Delta$ of non-zero vectors of $\mathbb{F}_3^2$. Let $k\geqslant 0$ and let $H_k=\GL(2,3) \wr T_k$ act in its  product action on $\Delta^{4^k}$. Let $B_k$ be the base group of $H_k$ (so that $H_k=B_k\rtimes T_k$), and let $Z_k$ be the center of $B_k$. As $T_k$ has degree $4^k$, we have $B_k\cong \GL(2,3)^{4^k}$ and $Z_k\cong \C_2^{4^k}$. View $Z_k$ as a vector space over the field $\F_2$ with basis consisting of the  generators of the $4^k$ copies of $Z(\GL(2,3))$. Let $N_k$ be the codimension 1 subspace of $Z_k$ comprising vectors with coordinates summing to zero in $\F_2$. Note that $N_k$ is an intransitive normal subgroup of~$H_k$. 

Since $\GL(2,3)$ is semiprimitive on $\Delta$, $H_k$ is semiprimitive on $\Delta^{4^k}$, by~\cite{Semiprimitive}*{Theorem 9.7}. Hence $N_k$ is semiregular on $\Delta^{4^k}$. It follows by~\cite{Semiprimitive}*{Lemma 3.1} that $H_k/N_k$ acts faithfully and semiprimitively on the set $\Omega$ of $N_k$-orbits in $\Delta^{4^k}$. Here $H_k/N_k$ has degree 
\[
  |\Omega|=\frac{8^{4^k}}{|N_k|}=\frac{(2\cdot 4)^{4^k}}{2^{4^k-1}}=2\cdot 4^{4^k}
\]
while
\[
  c(H_k/N_k)=c(H_k)-c(N_k)=5\cdot 4^k+\frac{4}{3}(4^k-1)-(4^k-1)=\frac{16}{3}4^k-\frac{1}{3}=\frac{8}{3}\log_2 (2\cdot 4^{4^k})-3,
\]
as in Theorem~\ref{theo:qpsp}(b). Note also that $H_k/N_k$ is not
quasiprimitive on $\Omega$ since it has a normal subgroup $Z_k/N_k$
of order $2$ and, as $|\Omega|>2$,  $Z_k/N_k$ is intransitive on $\Omega$.
\end{example}

\begin{remark}\label{rem:qp}
  We show that the bound in Theorem~\ref{theo:qpsp}(a), is never attained.
  Suppose to the contrary that $G$ is quasiprimitive of degree $n$, but
  not primitive, and that $c(G)=c_{\rm na}(\log_2{n}-1)-\frac43$.
  It follows from the proof of Theorem~\ref{theo:qpsp}(a) that $G$
  acts primitively on a set $\Delta$ of $n/2$ blocks of imprimitivity
  each of size $2$, and that the induced primitive group $G^\Delta$ is not
  affine, and $c(G^\Delta)$ achieves the upper bound of Theorem~\ref{theo:na}.
  Thus $G\cong G^\Delta = \sym_5\wr T_k$ in product action and the stabiliser
  of a block $\delta\in\Delta$ is $G_\delta\cong \sym_4\wr T_k$. Since $G$ is
  quasiprimitive on $n$ points, the stabiliser in $N=(\alt_5)^{4^k}$ of a
  point $\alpha\in\delta$ is a subgroup of index $2$ in
  $N_\delta\cong (\alt_4)^{4^k}$. However, no such subgroup exists.
\end{remark}

In Example~\ref{ex:qp} we provide an infinite family of quasiprimitive groups  $G$ which are not primitive and are such that the composition lengths $c(G)$ grow logarithmically with the degree. The competing requirements for such a construction are (a) to use a simple group such as $\alt_5$ for the direct factors of the socle, and a group $T_k$ permuting the factors of the socle to make $c(G)$ large relative to the degree; and (b)  to define the point stabiliser to ensure that the socle is transitive. % now outside Example

\begin{example}\label{ex:qp}
Let $k$ be a positive integer, and consider the group $X= \sym_5\wr T_k$, which has a primitive action of degree $d=5^{4^k}$ on a set $\Delta$, and satisfies  $c(X)=c_{\rm na}\log_2{d}-\frac43$, by Theorem~\ref{theo:na}. There is an element $\delta\in\Delta$ such that $X_\delta = \sym_4\wr T_k$ where each factor $\sym_4$ of the base group of $X_\delta$ is the stabiliser in $\sym_5$ of the point $5$.

Let $N = \alt_5^{4^k}$, the unique minimal normal subgroup of $X$, and $B=\sym_5^{4^k}$, the base group of $X$. Let $B_0=\sym_2^{4^k}$ denote the subgroup of $B$ which projects to $\langle(1,2)\rangle$ on each factor $\sym_5$ of $B$, so $B=N\rtimes B_0$. Also $B_0\leqslant G_\delta$ and $B_0$ normalises $N_\delta = \alt_4^{4^k}$.

The transitive conjugation-action of the top group $T_k$ on the $4^k$ factors $\sym_5$ of $B$ preserves a system of imprimitivity with $4^{k-1}$ blocks of size $4$. Let $D = {\rm Diag}(\sym_2^4)$  and let  $M = D^{4^{k-1}}$ be the subgroup of $B_0$ such that the image of $M$ under projection to  $\sym_2^4$ is $D$, for each of the $4^{k-1}$ blocks of size $4$. Then $M$ is $T_k$-invariant, being constant on each minimal block for $T_k$ (of size $4$). 

Define $G$ to be the subgroup $G= N.M.T_k$ of $X$. Since $G$ contains the top group $T_k$, it follows that $N$ is a minimal normal subgroup of $G$, and in fact it is the unique minimal normal subgroup since $C_G(N)=C_X(N)=1$. It is not difficult to see that 
$G_\delta = N_\delta.M.T_k$ is maximal and core-free in $G$, so $G$ acts faithfully and primitively on $\Delta$ of degree $d=5^{4^k}$.

We define a subgroup $H$ of $G_\delta$ such that $G$ acts quasiprimitively on the coset space $\Omega=[G:H]$. Let $O_2(N_\delta) \cong (\C_2^2)^{4^k}$ be the largest normal $2$-subgroup of $N_\delta$, and let   $D_1 = {\rm Diag}(\sym_3^4)$  (with $\sym_3$ fixing points $4, 5$) and  $M_1 = D_1^{4^{k-1}}$, so $M_1$ contains $M$ and $M_1$ is $T_k$-invariant. Let $H= O_2(N_\delta).M_1.T_k$. Then $H$ is a subgroup of $G_\delta$ of index 
\[
|G_\delta:H|  = |N_\delta: H\cap N_\delta| = 3^{4^k - 4^{k-1}} = 3^{3.4^{k-1}}.
\]
Since $G_\delta$ is a core-free subgroup of $G$, so is $H$ and hence $G$ acts transitively and faithfully on $\Omega$.  Moreover, the displayed equation implies that $N$ is transitive on $\Omega$. Since $N$ is the unique minimal normal subgroup of $G$, $G$ is quasiprimitive (but not primitive) on $\Omega$.

The degree is $n=|\Omega| = |\Delta||G_\delta:H| =  5^{4^k}\cdot 3^{3.4^{k-1}} = x^{4^k}$, where $x = 5\cdot 3^{3/4}$. Thus $\log_2 n = 4^k \log_2 x$. Also (using Theorem~\ref{C2}) 
\[
c(G) = c(N) + c(M) + c(T_k) = 4^k + 4^{k-1} + \frac 43(4^k-1)  =  4^k\left ( 1 + \frac 14 + \frac 43 \right) - \frac 43 = \frac{31}{12}\, 4^k  - \frac 43.
\]
It follows that $c(G) =  c \log_2 n -\frac 43 $, where  $c=   \frac{31}{12\log_2 x} = \frac{31}{12\log_25\, +\, 9\log_2 3}  = 0.73\cdots$.
\end{example}

\section*{Acknowledgments}
We would like to thank the Centre for the Mathematics of Symmetry and Computation for its support, as this project grew from a problem posed by the first author at the annual CMSC retreat in 2017. The first two authors gratefully acknowledge the support of the Australian Research Council Discovery Grant DP160102323. The third author thanks the Australian Government for the support of a Research Training Program grant. Finally, we thank the referee for carefully reading our manuscript.

\end{document}